\newtheorem{theorem}{Theorem}[section]
\newtheorem{proposition}[theorem]{Proposition}
\newtheorem{lemma}[theorem]{Lemma}
\theoremstyle{definition}
\theoremstyle{remark}
\newtheorem{remark}[theorem]{Remark}
\newtheorem{remarks}[theorem]{Remarks}
\numberwithin{equation}{section}
\newcommand{\bbC}{{\mathbb C}}
\newcommand{\bbZ}{{\mathbb Z}}
\newcommand{\bbR}{{\mathbb R}}
\newcommand{\bbP}{{\mathbb P}}
\newcommand{\bbT}{{\mathbb T}}
\newcommand{\calR}{{\mathcal R}}
\newcommand{\bel}{\begin{equation}\label}
\newcommand{\ee}{\end{equation}}
\begin{document}

\openup1pt

\title[The Generalized Legendre transform and its applications]{The Generalized Legendre transform and its applications to inverse spectral problems}

\author{Victor Guillemin}\address{Department of Mathematics, Massachusetts Institute of Technology, Cambridge, MA 02139, USA}
\thanks{Victor Guillemin is supported in part by NSF grant
DMS-1005696.}
\email{vwg@math.mit.edu} 
\author{Zuoqin Wang}\address{School of Mathematical Sciences, University of Science and Technology of China, Hefei, Anhui 230026, P.R.China}\email{wangzuoq@ustc.edu.cn}
 
\begin{abstract}
Let $M$ be a Riemannian manifold, $\tau: G \times M \to M$ an isometric action on $M$ of an $n$-torus $G$ and $V: M \to \mathbb R$ a bounded $G$-invariant smooth function. By $G$-invariance the Schr\"odinger operator, $P=-\hbar^2 \Delta_M+V$, restricts to a self-adjoint operator on $L^2(M)_{\alpha/\hbar}$, $\alpha$ being a weight of $G$ and $1/\hbar$ a large positive integer. Let $[c_\alpha, \infty)$ be the asymptotic support of the spectrum of this operator. We will show that $c_\alpha$ extend to a function, $W: \mathfrak g^* \to \mathbb R$ and that, modulo assumptions on $\tau$ and $V$ one can recover $V$ from $W$, i.e. prove that $V$ is spectrally determined. The main ingredient in the proof of this result is the existence of a ``generalized Legendre transform" mapping the graph of $dW$ onto the graph of $dV$. 
\end{abstract}
 
\maketitle

\section{Introduction}

Let $G$ be an $n$ dimensional torus and $\mathfrak g$ the Lie algebra of $G$. Given a weight $\alpha \in \mathfrak g^*$, we will denote by $\chi_{\frac{\alpha}{\hbar}}: G \to S^1$ the character of $G$ associated with the weight $\frac{\alpha}{\hbar}$, $\frac 1{\hbar}$ being a large integer. 

Now let $M$ be a Riemannian manifold, $V: M \to \bbR$ a $C^\infty$ function and 
\begin{equation}\label{P}
P = \hbar^2 \Delta_M +V
\end{equation}
the semi-classical Schr\"odinger operator associated with $V$, where $\Delta_M$ is the Laplace-Beltrami operator acting on $L^2(M)$. In order to ensure that the spectrum of $P$ is discrete we will assume that $M$ is compact, or, if not, that $V$ is proper and tends to $+\infty$ as $x$ tends to infinity in $M$. 

Now let $\tau: G \times M \to M$ be an isometric action of $G$ on $M$ and assume that the function $V$ alluded above is an element of $C^\infty(M)^G$. We will denote by $L^2(M)_{\frac{\alpha}{\hbar}}$ the space of $L^2$ functions on $M$ which transform under the action of $G$ by the character $\chi_{\alpha/\hbar}$, i.e.   
\begin{equation}\label{ch}
L^2(M)_{\frac{\alpha}{\hbar}} = \left\{ f \in L^2(M)\ |\ \gamma_g^* f = \chi_{\frac{\alpha}{\hbar}}(g) f\right\},
\end{equation}
and we will denote by $\mu_{{\alpha}, {\hbar}}$ the spectral measure of the operator, $P$, restricted to $L^2(M)_{\frac{\alpha}{\hbar}}$, i.e. for $\rho \in C_0^\infty(\bbR)$, 
\begin{equation}
\label{spec}
 \mu_{\alpha, \hbar} (\rho )  = \sum \rho (\lambda_i(\alpha, \hbar)),
\end{equation}
where 
\[
\lambda_i(\alpha, \hbar), \quad i=1, 2, 3, \cdots
\]
are the eigenvalues of $P$ restricted to $L^2(M)_{\frac{\alpha}{\hbar}}$. The asymptotic behavior of such equivariant eigenvalues has been studied by various authors, see e.g. \cite{BH}, \cite{Do} and \cite{GU}. Recently in the article \cite{DGS} E. Dryden, V. Guillemin and R. Sena-Dias obtained an asymptotic formula for the measure (\ref{spec}) in terms of the symplectic reduction $(T^*M)_\alpha$ of $T^*M$. More precisely, if $\tilde\tau$ is the lifted $G$-action on $T^*M$, then $\tilde\tau$ is a Hamiltonian action and we will denote its moment map by $\phi: T^*M \to \mathfrak g^*$. One can show (c.f. \cite{GS1}) that if $\alpha$ is a regular value of $\phi$, then $G$ acts in a locally free fashion on $\phi^{-1}(\alpha)$. For simplicity let's assume that this action is free, so that  the quotient 
\begin{equation}\label{sympred}
(T^*M)_\alpha = \phi^{-1}(\alpha) / G 
\end{equation}
is a smooth manifold which inherits from $T^*M$ a quotient symplectic structure. 

Now let $p: T^*M \to \bbR$ be the semi-classical symbol of the operator (\ref{P}), i.e. at $\xi \in T_m^*M$ let 
\[
p(m, \xi) = (\xi, \xi)_m^* +V(m),
\]
where $(\cdot, \cdot)_m^*$ is the  inner product on $T_m^*M$ induced by the Riemannian metric on $M$. Since this symbol is $G$-invariant, its restriction to $\phi^{-1}(\alpha)$ is the pull-back to $\phi^{-1}(\alpha)$ of a $C^\infty$ function 
\begin{equation}
\label{palpha}
p_\alpha: (T^*M)_\alpha \to \bbR
\end{equation}
and the asymptotic formula for $\mu_{\alpha, \hbar}$ that we alluded to above asserts that as $\hbar \to 0$,
\begin{equation}\label{mualphah}
\mu_{\alpha, \hbar} = (2\pi \hbar)^{n-d} \Big( (p_\alpha)_* \nu_\alpha + O(\hbar) \Big),
\end{equation}
where $\nu_\alpha$ is the symplectic volume form on $(T^*M)_\alpha$ and $d$ the dimension of $M$. 

As was observed by Abraham-Marsden in \cite{AM}, \S 4.3, the reduced space (\ref{sympred}) and the map (\ref{palpha}) have the following alternative description:  Let $M_0$ be the open submanifold of $X$ on which $G$ acts freely, let $X = M_0/G$ and let $\pi: M_0 \to X$ be the fibration of $M_0$ over $X$. Then as a manifold
\begin{equation}
\label{TY}
(T^*M)_\alpha \simeq T^*X
\end{equation}   
and under this identification, $p_\alpha: T^*X \to \bbR$ is the symbol of the Schr\"odinger operator 
\begin{equation}
\label{Sc}
\hbar^2 \Delta_X + V_\alpha,
\end{equation} 
where $\Delta_X$ is the Laplace operator associated with the quotient Riemann metric on $X$ and $V_\alpha$ is the potential function on $X$ defined by
\begin{equation}\label{V}
(\pi^* V_\alpha)(m) = V(m) + \langle \alpha, \alpha\rangle_m^*.
\end{equation}
(To make sense of the second summand note that one gets from the free action of $G$ on $M_0$ an identification of $\mathfrak g$ with the vertical tangent space of $M_0$ at $m$, and hence from the Riemannian inner product, $(\cdot, \cdot)_m$ on $T_mM_0$ an inner product, $\langle \cdot, \cdot \rangle_m$ on $\mathfrak g$ and a dual inner product, $\langle \cdot, \cdot \rangle_m^*$ on $\mathfrak g^*$. )

\begin{remarks}
$\mathrm{(1)}$ The canonical symplectic form on $T^*X$ does not, in general, coincide with the reduced symplectic form on $(T^*M)_\alpha$. However one can show that the symplectic volume forms do, and hence the measure $(p_\alpha)_* \nu_\alpha$ in the asymptotic formula (\ref{mualphah})  \emph{is} just the push-forward by the functions, $p_\alpha$, of the symplectic volume form on $T^*X$. 

\noindent $\mathrm{(2)}$ In most cases of interest, the manifold $M_0$ is \emph{not} compact. However, one can show that the function $p_\alpha: T^*X \to \bbR$ is proper and hence that $(p_\alpha)_*\nu_\alpha$ is, in all cases, well defined. 
\end{remarks}

Let $[c_\alpha, +\infty)$ be the support of the measure $(p_\alpha)_*\nu_\alpha$. Then by (\ref{V}), the quantity 
\begin{equation}
\label{calpha}
c_\alpha = \min_{x \in X} V_\alpha(x)
\end{equation}
is a spectral invariant of the operator (\ref{P}). We remark that although at first glance this invariant is only defined for integer weights, i.e. weights $\alpha \in \mathfrak g^*$ that sits in the weight lattice, it is actually defined for all rational weights $\alpha \in \mathfrak g^*$ since $\alpha/\hbar$ is in the weight lattice for some small $\hbar$, and thus by continuation defined for all $\alpha \in \mathfrak g^*$.  Our goal in this paper will be to show that in a number of interesting cases this invariant determines the potential $V$. One simple example of a result of this type was proved in \cite{DGS} where it was shown that in the case of  the Schr\"odinger operator 
\begin{equation}
\label{RSch}
\hbar^2 \Delta_{\bbR^{2n}} + V(y_1^2+y_2^2, \cdots, y_{2n-1}^2+y_{2n}^2)
\end{equation}
and the group 
\begin{equation}\label{torus}
G = SO(2) \times \cdots \times SO(2), \qquad (n \mbox{\ factors}) 
\end{equation}
the equivariant spectrum of (\ref{RSch}) determines the function $V=V(r_1,\cdots, r_n)$ providing we impose the following growth conditions on $V$:
\begin{subequations}\label{DGS-C}
\begin{align}
& \mathrm{(a)}\ V \mbox{\ is proper on} \mathbb R^n_+,\label{DGS-Ca}\\
& \mathrm{(b)}\ \frac{\partial V}{\partial r_i}>0  \mbox{ for all } i, \label{DGS-Cb}\qquad\qquad\qquad\qquad \\
& \mathrm{(c)}\ \left[\frac{\partial^2 V}{\partial r_i\partial r_j}\right] \ge 0. \label{DGS-Cc}
\end{align}
\end{subequations}
 More explicitly, in this example we have, for $x_i^2 = y_{2i-1}^2+y_{2i}^2$, 
\begin{equation}
\label{Vralpha}
V_\alpha(x) := V(x_1^2, \cdots, x_n^2) + \sum \frac{\alpha_i^2}{x_i^2}.
\end{equation}
Therefore if we let 
\[  
s_i  =\alpha_i^2, \quad
 c_\alpha  = G(s_1, \cdots, s_n), \quad
t_i  = \frac 1{x_i^2}
\] 
and 
\[F(t_1, \cdots, t_n) =-V(\frac 1{t_1}, \cdots, \frac 1{t_n}),\] 
then the formula (\ref{calpha}) can be rewritten as 
\[
G(s) = \min_{t \in \mathbb R^n_+} [t \cdot s - F(t)]. 
\]
The minimum is achieved at the unique critical point $s= \frac{\partial F}{\partial t}$.  Moreover, under the hypotheses above on $V$, $\frac{\partial F}{\partial t}$ is a bijection 
\[
\frac{\partial F}{\partial t}: \bbR_+^n \to \bbR_+^n.
\]
Therefore by the identity 
\[
t \cdot s-F(t)=G(s) \qquad \mbox{\ for \ } s=\frac{\partial F}{\partial t}
\]
and the inversion formula for the Legendre transform 
\begin{equation}\label{Leg}
s=\frac{\partial F}{\partial t} \Longleftrightarrow t = \frac{\partial G}{\partial s}
\end{equation}
we get 
\begin{equation}\label{FW}
F(t) = s \frac{\partial G}{\partial s} - G(s) \qquad \mbox{for\ } t=\frac{\partial G}{\partial s}.
\end{equation}
Therefore, since $c_\alpha = G(\alpha_1^2, \cdots, \alpha_n^2)$, $G$ is spectrally determined and hence so are the functions $F$ and $V$. 

We will show in this paper that this ``Legendre transform" argument can be considerably generalized. More precisely, let 
\[W(x, \alpha)=\langle \alpha, \alpha \rangle^*_{\pi^{-1}(x)}\]
be the function appearing in (\ref{V}) and let 
\[
V(x, \alpha)=V(\pi^{-1}(x))+W(x, \alpha).
\]
Suppose  $  W(x, \alpha) $ is the generating  function of a canonical transformation
\begin{equation}
\tag{1.17a}\label{a}
 \gamma: T^*X \to T^*\bbR^n
\end{equation}
and in addition for all $ \alpha \in \mathbb{R}^n $
\begin{equation}
\tag{1.17b}\label{b}
V(x, \alpha) 
\end{equation}
\noindent has a unique non-degenerate minimum $ G(\alpha). $

We will then show that the potential $V(x)$ can be reconstructed from the function $G$, and hence that $V$ can be reconstructed from the equivariant spectral invariants $c_\alpha$. This result will be proved in section 3 and in the following sections we will discuss applications of it. More explicitly, in section 3.1 we will use this result to get an improved version of the inverse result that we described above for the Schr\"odinger operator (\ref{RSch}) (namely we will show that the hypothesis (\ref{DGS-Cb}) is unnecessary), and we will also show that the proof of this result, with small modifications, gives one an inverse result for the Schr\"odinger operator on the $n$-fold product $\mathbb{CP}^1 \times \cdots \times \mathbb{CP}^1$. Then in section 4 we will focus on a problem of which this example is a special case: the inverse spectral problem for the Schr\"odinger operator on an arbitrary toric variety, $M$, and state some sufficient conditions on potentials, $V$, and subregions, $Y$, of $X$ that will guarantee spectral determinability of the restriction of $V$ to $Y$. (These conditions are in general rather hard to verify in practice, but we will discuss their verifiability for two interesting classes of toric varieties: complex projective spaces and Hirzebruch surfaces)

In section 5 we will turn our attention to some inverse spectral problems having to do with the ``local spectral invertibility" and ``local spectral rigidity" of a Schr\"odinger potential, $V$, in the neighborhood of a non-degenerate minimum of $V_\alpha$. (The proof of these results is based on the fact that if the critical points, $p$, of $V_\alpha$ are non-degenerate and the corresponding critical values are distinct, then these critical values are also equivariant spectral invariants of the Schr\"odinger operator. Hence one can make use of the Legendre techniques of section 3 to obtain inverse results for perturbations of $V$ in small neighborhood of these points.) 
 
Finally in the last section of this paper we will show how one can deduce from the inverse spectral results described above for the semi-classical Schr\"odinger operator similar results for the classical Laplace operator on a line bundle using the method of ``reduction in stages" described in \cite{DGS} section 3. (We would also like, by the way, to thank two of the co-authors of this paper, Emily Dryden and Rosa Sena-Dias, for a number of very helpful suggestions bearing on the results we've just described.)


\section{Toric manifolds and their canonical reduced Riemannian metrics} $ $

Let $M$ be a compact  toric manifold, i.e. a $2n$ dimensional symplectic manifold which admits an effective Hamiltonian action of $G= \bbT^n$.  Let $\phi: M \to \mathfrak g^* = \bbR^n$ be its moment map. Then it is well known that the image $\overline{\mathcal P}$ of $\phi$ is a \emph{Delzant polytope}, i.e. a convex polytope  in $\bbR^n$ such that 
\begin{itemize}
\item (\emph{simplicity}) there are $n$ edges meeting at each vertex;
\item (\emph{rationality}) the edges meeting at a vertex $p$ are of the form 
\[
p +tu_i \quad (t \ge 0)
\] 
with $u_i \in \bbZ^n, 1\le i \le n$;
\item (\emph{smoothness}) the vectors $u_1, \cdots, u_n$ can be chosen as a $\bbZ$-basis of $\bbZ^n$.  
\end{itemize}
A classical result of Delzant (\cite{De}) asserts that there is a one-to-one correspondence between compact connected $2n$ dimensional toric manifolds and Delzant polytopes in $\bbR^n$, and in fact, we now know that \emph{most} geometric and topological information about $M$ is encoded in  the combinatoric  of $\overline{\mathcal P}$, see, e.g. the books \cite{Au} and \cite{G3}.  

According to Delzant's construction,  $M$ admits an intrinsic $G$-invariant complex structure which is compatible with its symplectic form, and hence gives $M$ an intrinsic K\"ahler, and thus Riemannian metric. 
We will denote by ${\mathcal P}$ the interior of $\overline{\mathcal P}$. It is well known that the dense open subset 
\[
M_0 = \phi^{-1}({\mathcal P})
\] 
of $M$ is a complex torus with respect to the intrinsic complex structure, and the $G$-action is free precisely on $M_0$.  

Before we apply the result in the previous section to this setting, we first observe that the ``base manifold'' $X=M_0/G$ is exactly the open polytope ${\mathcal{P}}$. In particular, the symplectic quotient $(T^*M)_\alpha$ is identified with $T^*{\mathcal{P}}$. 

Now let $V \in C^\infty(M)^G$ be a $G$-invariant smooth function on $M$. Then the restriction of $V$ to $M_0$ gives rise to a smooth function, still denoted by $V$, on ${\mathcal{P}}$.  Similarly for any weight $\alpha$, $\langle \alpha, \alpha\rangle^*$ descends to a function on ${\mathcal{P}}$. So the result alluded in the previous section becomes 
\begin{theorem}\label{toricm}
The equivariant spectrum of the Schr\"odinger operator $P=\hbar^2 \Delta_M +V$ determines the function 
\begin{equation}\label{malpha}
G(\alpha) = \min_{x \in {\mathcal{P}}} \Big( V(x)+\langle \alpha, \alpha \rangle^* (x) \Big).
\end{equation}
\end{theorem}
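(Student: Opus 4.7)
The plan is to specialize the general spectral-invariance statement from Section~1 to the toric setting and then unravel the definitions. The two ingredients are, first, the observation in the paragraph following~(\ref{calpha}) that $c_\alpha = \min_{x \in X} V_\alpha(x)$ is the left endpoint of the asymptotic support of $\mu_{\alpha,\h}$, and hence is a spectral invariant of $P$ that, by the continuation argument in the same paragraph, makes sense for every $\alpha \in \frakg^*$; and second, the identification of the base $X = M_0/G$ with the open Delzant polytope $\calP$ recalled just above the theorem statement.

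Given these, I would argue as follows. Since $V \in C^\infty(M)^G$ and $\langle \alpha, \alpha \rangle^*$ are both $G$-invariant on $M_0$, they descend along $\pi \colon M_0 \to \calP$ to functions on $\calP$ which, abusing notation, are denoted by the same symbols. Under the identification $X \simeq \calP$ the defining relation~(\ref{V})
\[
(\pi^* V_\alpha)(m) = V(m) + \langle \alpha, \alpha \rangle^*_m
\]
becomes the pointwise identity $V_\alpha(x) = V(x) + \langle \alpha, \alpha \rangle^*(x)$ for $x \in \calP$. Substituting into~(\ref{calpha}) gives
\[
c_\alpha = \min_{x \in \calP}\bigl( V(x) + \langle \alpha, \alpha \rangle^*(x) \bigr) = G(\alpha),
\]
and since $c_\alpha$ is spectrally determined for every $\alpha \in \frakg^*$, so is $G$.

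The one point meriting a side remark is that the minimum over the open polytope is actually realized. On a Delzant toric manifold equipped with the canonical K\"ahler metric, $\langle \alpha, \alpha \rangle^*_m$ blows up as $m$ approaches the boundary strata of $\calP$, where the $G$-action fails to be free and the identification of $\frakg$ with the vertical tangent space degenerates; together with the boundedness of $V$ on the compact manifold $M$, this makes $V_\alpha$ proper on $\calP$ and forces the infimum to be attained in the interior. This is essentially the specialization of Remark~(2) of Section~1 to the toric case, and it is the only place where compactness of $M$ intervenes. Note that no Legendre-transform machinery is needed at this stage: the theorem asserts only that $G$ is a spectral invariant, not yet that $V$ can be recovered from $G$, which is the subject of the subsequent sections.
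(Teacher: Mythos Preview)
Your proposal is correct and is essentially the paper's own argument: the paper presents Theorem~\ref{toricm} without a separate proof, simply as the specialization to the toric case of the spectral invariance of $c_\alpha$ from~(\ref{calpha}) together with the identification $X\simeq\calP$ spelled out just before the theorem. Your side remark on properness is a welcome clarification; it is implicitly covered in the paper by Remark~(2) of \S1 and made explicit by formula~(\ref{Walphax}), where the terms $\frac{(dl_i(\alpha))^2}{l_i(x)}$ blow up as $x$ approaches the corresponding facet.
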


It remains to describe the function $\langle \alpha, \alpha \rangle^* (x)$ explicitly.  By definition, $\overline{\mathcal P}$ is defined by a set of inequalities of the form 
\begin{equation}\label{ineq}
l_i(x) = \sum_{j=1}^n l_i^j x_j + l_i^0 \ge 0, \quad 1 \le i \le d,
\end{equation}
where $\vec l_i = \langle l_i^1, \cdots, l_i^n \rangle^T$ is a primitive element of the lattice $\bbZ^n$, and $d$ is the number of facets of $\overline{\mathcal P}$.  Note that $x \in {\mathcal P}$ if and only if $l_i(x)>0$ for all $i$. 
It was proven by one of the authors  (\cite{G2}) that the induced Riemannian metric on ${\mathcal P}$ is precisely given by the formula
\begin{equation}\label{metric}
\frac 12 \sum_{i=1}^d \frac{(dl_i)^2}{l_i(x)}
\end{equation}
from which he derived an explicit formula for the canonical K\"ahler metric on $M$ and from it the formula
\begin{equation}\label{Walphax}
W(x, \alpha): = \langle \alpha, \alpha \rangle^*(x) = \frac 12 \sum_{i=1}^{d} \frac{(dl_i(\alpha))^2}{l_i(x)}. 
\end{equation}

\section{The inversion formula for the Legendre transform revisited}

Let $\mathcal U_1$ and $\mathcal U_2$ be connected open subsets of $\mathbb R^n$, 
\[\gamma: T^*\mathcal U_1 \to T^* \mathcal U_2\]
a canonical transformation and $\Lambda_1$ a Lagrangian submanifold of $T^*\mathcal U_1$. Then $\Lambda_2 = \gamma(\Lambda_1)$ is a Lagrangian submanifold of $T^* \mathcal U_2$ and 
\[
\Gamma = \{ (x, y, -\xi, \eta)\ |\ (y, \eta) = \gamma(x, \xi)\}
\]
a Lagrangian submanifold of $T^*(\mathcal U_1 \times \mathcal U_2)$. 
 
Suppose that these Lagrangian manifolds are \emph{horizontal}, i.e. suppose that there exist functions $F \in C^\infty(\mathcal U_1), G \in C^\infty(\mathcal U_2)$ and $W \in C^\infty(\mathcal U_1 \times \mathcal U_2)$ such that 
\begin{subequations}\label{horiLag}
\begin{align}
& \mathrm{(a)}\  \quad (x, \xi) \in \Lambda_1 \Longleftrightarrow \xi = \frac{\partial F}{\partial x}, \label{horiLag-a}\\
& \mathrm{(b)}\ \quad (y, \eta) \in  \Lambda_2 \Longleftrightarrow \eta = \frac{\partial G}{\partial y},
\label{horiLag-b}\\ 
& \mathrm{(c)}\ \quad (x, y, \xi, \eta) \in \Gamma \Longleftrightarrow \xi = \frac{\partial W}{\partial x}, \eta = \frac{\partial W}{\partial y}.\label{horiLag-c}
\end{align}
\end{subequations}
Then for $(x, \xi) \in \Lambda_1$, $(y, \eta) = \gamma(x, \xi)$ if and only if
\begin{equation}
\frac{\partial F}{\partial x}(x) =- \frac{\partial W}{\partial x} (x, y) \quad \mathrm{and} \qquad
\frac{\partial G}{\partial y}(y) = \frac{\partial W}{\partial y}(x, y).
\end{equation}

Let's now make the additional assumption that for all $y$, the function 
\[
x \mapsto F(x)+W(x, y)
\]
has a unique critical point and that this point is a global minimum. 
\begin{theorem}
Under this assumption, 
\begin{equation}
\label{GFW}
G(y) = \min_x (F(x)+W(x, y))+C,
\end{equation}
$C$ being an additive constant (which we can set equal to zero by replacing $G$ by $G-C$. )
\end{theorem}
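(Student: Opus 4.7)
The plan is to define $H(y) := \min_x \bigl(F(x)+W(x,y)\bigr)$ and show $\nabla G = \nabla H$ on $\mathcal U_2$. Since $\mathcal U_2$ is connected, this forces $G-H$ to be a constant $C$, which is exactly (\ref{GFW}). The two ingredients are (i) an envelope-theorem differentiation of $H$, and (ii) tracing a point of $\Lambda_2$ back through $\gamma^{-1}$ to identify it with the minimizer of $F+W(\cdot,y)$.

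First I would use the hypothesis (unique critical point, global minimum, tacitly non-degenerate) together with the implicit function theorem applied to the equation $\partial_x(F+W)(x,y)=0$ to obtain a smooth map $y\mapsto x(y)$ realizing the minimum, so that $H(y)=F(x(y))+W(x(y),y)$. Differentiating and using $\partial_x F(x(y))+\partial_x W(x(y),y)=0$ to cancel the $\partial x/\partial y$ contributions gives the envelope identity
\begin{equation}
\frac{\partial H}{\partial y}(y)=\frac{\partial W}{\partial y}(x(y),y).
\end{equation}

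Next I would import the Lagrangian picture. Fix $y\in\mathcal U_2$ and consider the point $(y,\partial G/\partial y(y))\in\Lambda_2$, which by (\ref{horiLag-b}) sits on $\Lambda_2$. Since $\gamma$ is a diffeomorphism carrying $\Lambda_1$ onto $\Lambda_2$, there is a unique $(x^*,\xi^*)\in\Lambda_1$ with $\gamma(x^*,\xi^*)=(y,\partial G/\partial y(y))$. Condition (\ref{horiLag-a}) gives $\xi^*=\partial F/\partial x(x^*)$, while the definition of $\Gamma$ together with (\ref{horiLag-c}) translates the relation $\gamma(x^*,\xi^*)=(y,\partial G/\partial y(y))$ into the matching equations
\begin{equation}
\frac{\partial F}{\partial x}(x^*)=-\frac{\partial W}{\partial x}(x^*,y),\qquad \frac{\partial G}{\partial y}(y)=\frac{\partial W}{\partial y}(x^*,y).
\end{equation}
The first equation says precisely that $x^*$ is a critical point of $x\mapsto F(x)+W(x,y)$; by the uniqueness hypothesis, $x^*=x(y)$. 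Substituting into the second equation and comparing with the envelope identity yields $\partial G/\partial y(y)=\partial W/\partial y(x(y),y)=\partial H/\partial y(y)$, completing the proof.

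The computations are all routine. The only conceptual care needed is in step (ii): one must run the argument starting from $\Lambda_2$ (where uniqueness of the $\gamma$-preimage is automatic) rather than starting at the known minimizer in $\Lambda_1$, because in the latter direction one would need to know a priori that the $\mathcal U_2$-projection of $\gamma$ of the minimizing point is $y$ itself, and that fact is essentially what one is trying to use. A secondary, technical issue is the smoothness of $y\mapsto x(y)$; the statement is clean under non-degeneracy of the minimum, which is the version actually needed in the applications in later sections.
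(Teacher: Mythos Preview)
Your proposal is correct and follows essentially the same route as the paper. Both arguments pull back a point of $\Lambda_2$ through $\gamma^{-1}$, use the generating-function relations to identify the preimage with the unique critical point of $F+W(\cdot,y)$, and then conclude that $d(G-H)=0$; the paper packages your envelope step and matching step into the single sentence ``the restriction of the one-form $d(F+W-G)$ to the graph of $\sigma$ vanishes,'' whereas you spell out the two cancellations separately.
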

\begin{proof}
$\Lambda_1$ and $\Lambda_2$ are defined by the equations 
\[
\xi = \frac{\partial F}{\partial x} \qquad \mathrm{and}\qquad \eta = \frac{\partial G}{\partial y},
\]
and the graph of $\gamma$ by the equations 
\[
\xi = -\frac{\partial W}{\partial x} \qquad \mathrm{and}\qquad \eta =\frac{\partial W}{\partial y}.
\]
Thus if $\sigma: \mathcal U_2 \to \mathcal U_1$ is the map 
\[
y \in \mathcal U_2 \longrightarrow \left(y, \frac{\partial G}{\partial y}(y)\right) \stackrel{\gamma^{-1}}{\longrightarrow} \left(x, \frac{\partial F}{\partial x}(x)\right) \to x,
\]
the restriction of the the one-form $d(F+W-G)$ to the graph of $\sigma$ vanishes. Hence on this graph, $F+W-G = C$, and for $x = \sigma(y)$, 
\[
\frac{\partial}{\partial x} \left(F(x)+W(x, y)\right) = 0.
\]
In other words, $x$ is the unique global minimum of the function $x \mapsto F(x)+W(x, y)$, and at this minimum, 
\[
G(y) = F(x)+W(x, y). 
\]
\end{proof} 
\begin{remark}
The classical inversion theorem for Legendre transforms is easily derived from this result: Recall that a function $F \in C^\infty(\bbR^n)$ is \emph{strictly convex} if satisfies 
\[
\left[ \frac{\partial^2 F}{\partial x_i\partial x_j} \right] >0
\]
for all $x$. Suppose for simplicity that $F$ is \emph{stable}, i.e. $F$ has a unique critical point which is a global minimum. (This assumption is equivalent to the fact that $F$ is proper as a map of $\bbR^n$ into $\bbR$.) For an $F$ with these properties the Legendre transform 
\begin{equation}\label{diffeo}
\frac{\partial F}{\partial x}: \bbR^n \to \bbR^n
\end{equation}
is a diffeomorphism of $\bbR^n$ onto a convex open subset $\mathcal U$ of $\bbR^n$. For simplicity let's assume that this convex set is $\bbR^n$ itself. This is the case, for example, if $F$ has quadratic growth at infinity. Then the inverse of the diffeomorphism (\ref{diffeo}) is the Legendre transform
\begin{equation}
\label{LT}
\frac{\partial G}{\partial y} : \bbR^n \to \bbR^n,
\end{equation}
where for $y=\frac{\partial F}{\partial x}(x)$,
\begin{equation}
\label{GF}
G(y) = -x \cdot y +F(x).
\end{equation}
In other words, $G(y)$ is the function (\ref{GFW}) with $W(x, y)=-x \cdot y$, and the canonical transformation 
\[
\gamma: T^*\bbR^n \to T^* \bbR^n
\]
is the linear symplectomorphism
\begin{equation}
\label{lin.sym.}
\gamma(x, \xi) = (-\xi, x). 
\end{equation}
For more details about the classical Legendre transformation, see \cite{C}. 
\end{remark}

To apply theorem 3.1  to explicit examples (like the example we just described) it is important to know that the set, $\Gamma$, in (\ref{horiLag-c}), is the graph of a canonical transformation. We will discuss some necessary and sufficient conditions for this to be the case, beginning with the following simple result: 
\begin{theorem}
$\Gamma$ is the graph of a canonical transformation if and only if, for every $y \in \mathcal U_2$, the map 
\begin{equation}\label{3.8}
x \in \mathcal U_1 \mapsto \frac{\partial W}{\partial y}(x,y) \in T_y^*\mathcal U_2
\end{equation}
is a diffeomorphism onto an open subset of $T_y^*\mathcal U_2$. 
\end{theorem}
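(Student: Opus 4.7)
My plan is to introduce the map
\[\Phi: \mathcal U_1 \times \mathcal U_2 \to T^*\mathcal U_2, \qquad \Phi(x,y) = \Bigl(y, \frac{\partial W}{\partial y}(x,y)\Bigr),\]
which, under the natural parametrization $(x,y) \mapsto (x, y, \frac{\partial W}{\partial x}, \frac{\partial W}{\partial y})$ of $\Gamma$, is precisely the restriction to $\Gamma$ of the projection $\pi_2: T^*(\mathcal U_1 \times \mathcal U_2) \to T^*\mathcal U_2$. The plan is to show that $\Gamma$ is the graph of a canonical transformation if and only if $\Phi$ is a diffeomorphism onto an open subset of $T^*\mathcal U_2$, and to observe that the latter is equivalent to the fiberwise hypothesis in the statement.

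For the forward direction, suppose $\Gamma$ is the graph of a canonical transformation $\gamma: \mathcal V_1 \to \mathcal V_2$ between open subsets $\mathcal V_i \subset T^*\mathcal U_i$. Then $\pi_1|_\Gamma$ is a diffeomorphism onto $\mathcal V_1$, so $\Phi = \pi_2|_\Gamma = \gamma \circ (\pi_1|_\Gamma)$ is a diffeomorphism from $\mathcal U_1 \times \mathcal U_2$ onto $\mathcal V_2$. Restricting to the slice $\mathcal U_1 \times \{y\}$ yields a diffeomorphism $x \mapsto \frac{\partial W}{\partial y}(x,y)$ from $\mathcal U_1$ onto the open set $\mathcal V_2 \cap T_y^*\mathcal U_2$, as required.

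For the backward direction, assume the fiberwise hypothesis. First, $\Phi$ is injective: if $\Phi(x_1, y_1) = \Phi(x_2, y_2)$, comparing first coordinates gives $y_1 = y_2 =: y$, and injectivity of $x \mapsto \frac{\partial W}{\partial y}(x,y)$ then forces $x_1 = x_2$. Second, the Jacobian of $\Phi$ has the block form
\[\begin{pmatrix} 0 & I \\ W_{yx}(x,y) & W_{yy}(x,y) \end{pmatrix},\]
whose determinant is $\pm \det W_{yx}(x,y)$; invertibility of $W_{yx}$ at every point follows at once from the hypothesis that $x \mapsto \frac{\partial W}{\partial y}(x,y)$ is a local diffeomorphism for each $y$. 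Hence $\Phi$ is a smooth injective local diffeomorphism, and therefore a diffeomorphism onto an open subset $\mathcal V_2 \subset T^*\mathcal U_2$. Consequently $\pi_2|_\Gamma$ is a diffeomorphism, so $\Gamma$ is the graph of the map $\pi_1 \circ (\pi_2|_\Gamma)^{-1}: \mathcal V_2 \to T^*\mathcal U_1$. Because $\Gamma$ is Lagrangian (being the image of the closed one-form $dW$), this map is a symplectomorphism onto an open $\mathcal V_1 \subset T^*\mathcal U_1$, and its inverse $\gamma: \mathcal V_1 \to \mathcal V_2$ is the desired canonical transformation.

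The main obstacle, in the backward direction, is passing from the fiberwise diffeomorphism property to a global one: the hypothesis only controls $\Phi$ on individual slices $\mathcal U_1 \times \{y\}$, so injectivity between different slices must be extracted from the special structure of $\Phi$ (namely, that its first coordinate is $y$ itself). Once this is in hand, the remaining steps — local invertibility via the block-triangular Jacobian, and passage from a Lagrangian graph to a canonical transformation — are routine.
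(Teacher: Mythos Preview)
Your proof is correct and takes essentially the same approach as the paper. The paper argues fiberwise---identifying, for each fixed $y$, the preimage of $T_y^*\mathcal U_2$ in $\Gamma$ with the horizontal Lagrangian $\Lambda_y = \{(x, -\tfrac{\partial W}{\partial x}(x,y))\}$ and observing that the hypothesis makes $\gamma$ carry $\Lambda_y$ bijectively onto an open subset of $T_y^*\mathcal U_2$---whereas you package the same content globally via the map $\Phi$ and a block-Jacobian computation; the substance is identical.
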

\begin{proof}
For fixed $y$ the map 
\bel{3.9}
x \in \mathcal U_1 \to (-\frac{\partial}{\partial x}W(x,y), \frac{\partial}{\partial y}W(x,y))
\ee
maps $\mathcal U$ onto the preimage  of $T_y^*\mathcal U_2$ in $\Gamma$. Hence if the map (\ref{3.8})  is a diffeomorphism of $\mathcal U_1$ onto an open subset of $T_y^*\mathcal U_2$, $\Gamma$ is the graph of the canonical transformation, $\gamma$, mapping the horizontal Lagrangian submanifold 
\bel{3.10}
\Lambda_y = \{ (x, -\frac{\partial}{\partial x}W(x,y))\ |\ x \in \mathcal U_1\}
\ee
of $T^*\mathcal U_1$ bijectively onto an open subset of $T_y^*\mathcal U_2$. Moreover, the converse is also true: If $\Gamma$ is the graph of a canonical transformation then it has to map $\Lambda_y$ diffeomorphically onto an open subset of $T_y^*\mathcal U_2$ and hence (\ref{3.8}) is a diffeomorphism of $\mathcal U_1$ onto the same open subset of $T_y^*\mathcal U_2$. 
\end{proof}

From this observation one gets immediately the following necessary condition for $\Gamma$ to be the graph of a canonical transformation: 
\begin{proposition}
If $\Gamma$ is the graph of a  canonical transformation $\gamma$, the matrix 
\begin{equation}\label{3.11}
\left[  \frac{\partial^2 W}{\partial x_i\partial y_j}\right]_{1 \le i, j \le n}
\end{equation}
is non-singular for all $(x, y) \in \mathcal U_1 \times U_2$. 
\end{proposition}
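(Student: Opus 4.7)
The plan is to deduce this immediately from Theorem 3.3. That theorem characterizes the condition that $\Gamma$ be the graph of a canonical transformation as the requirement that, for every fixed $y \in \mathcal{U}_2$, the map
\begin{equation*}
\Phi_y: x \in \mathcal{U}_1 \longmapsto \frac{\partial W}{\partial y}(x,y) \in T_y^*\mathcal{U}_2
\end{equation*}
be a diffeomorphism onto an open subset of $T_y^*\mathcal{U}_2 \cong \mathbb{R}^n$.

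First I would observe that a smooth map between open subsets of $\mathbb{R}^n$ that is a diffeomorphism onto its image must, by the inverse function theorem, have a non-singular Jacobian at every point of its domain. Second, I would simply compute the Jacobian of $\Phi_y$ with respect to the variable $x$: the $(j,i)$ entry of this Jacobian is
\begin{equation*}
\frac{\partial}{\partial x_i}\left(\frac{\partial W}{\partial y_j}(x,y)\right) = \frac{\partial^2 W}{\partial x_i \partial y_j}(x,y),
\end{equation*}
which is precisely (up to transpose, which does not affect non-singularity) the matrix (\ref{3.11}). Therefore the hypothesis forces this matrix to be invertible for every $(x,y) \in \mathcal{U}_1 \times \mathcal{U}_2$.

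There is no real obstacle here; the statement is essentially a restatement of one direction of Theorem 3.3 in infinitesimal form. The only thing to be careful about is noting that the non-singularity is asserted at \emph{all} points $(x,y)$, not just at a single point, which is automatic because the diffeomorphism hypothesis in Theorem 3.3 holds for every $y \in \mathcal{U}_2$ and at every $x \in \mathcal{U}_1$ in the domain.
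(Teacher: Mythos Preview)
Your proposal is correct and follows essentially the same approach as the paper: the paper's one-line proof simply observes that non-singularity of the matrix (\ref{3.11}) is precisely the condition that the map (\ref{3.8}) be a local diffeomorphism at each $x$, which is an immediate consequence of Theorem~3.3. Your version just spells out this reasoning in slightly more detail.
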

\begin{proof}
This is just the condition that for all $y$, the map (\ref{3.8}) is locally a diffeomorphism at $x \in \mathcal U_1$. 
\end{proof}

We will next describe a sufficient condition for the map (\ref{3.8}) to be a diffeomorphism onto an open subset of $T_y^*\mathcal U_2$. (This condition may seem, at first glance, to be very restrictive, but it will turn out to be satisfied in most of the examples we'll be considering below.)

\begin{theorem}
\label{TH3.5}
Suppose that $\mathcal U_1$ is convex, that (\ref{3.11}) is non-degenerate for all $(x, y)$ in $\mathcal U_1 \times \mathcal U_2$ and that $W(x,y)$ is a homogeneous quadratic polynomial in $x$. Then (\ref{3.8}) is a diffeomorphism of $\mathcal U_1$ onto an open subset of $T_y^*\mathcal U_2$. 
\end{theorem}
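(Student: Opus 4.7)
The plan is to prove the theorem in two steps: a local step using the non-degeneracy hypothesis, and a global injectivity step that exploits the homogeneous quadratic structure together with convexity of $\mathcal U_1$.

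For the local step, fix $y \in \mathcal U_2$ and set $f_y(x) = \frac{\partial W}{\partial y}(x,y)$, viewed as a smooth map $\mathcal U_1 \to T_y^* \mathcal U_2 \cong \mathbb R^n$. The Jacobian of $f_y$ at $x$ is exactly the matrix (\ref{3.11}) evaluated at $(x,y)$, which by hypothesis is non-singular for every $(x,y) \in \mathcal U_1 \times \mathcal U_2$. Thus $f_y$ is a local diffeomorphism on $\mathcal U_1$; in particular, its image is open in $T_y^* \mathcal U_2$. It remains to show that $f_y$ is injective, since an injective local diffeomorphism is automatically a global diffeomorphism onto its (open) image.

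For the global step, use that $W(x,y)$ is homogeneous quadratic in $x$: write $W(x,y) = \tfrac{1}{2}\langle A(y) x, x\rangle$ for a symmetric matrix $A(y)$ depending smoothly on $y$. Differentiating in $y_j$ gives
\[
\frac{\partial W}{\partial y_j}(x,y) \;=\; \tfrac{1}{2}\langle B^{(j)}(y) x, x\rangle, \qquad B^{(j)}(y) := \frac{\partial A}{\partial y_j}(y),
\]
and each $B^{(j)}(y)$ is symmetric. Suppose $f_y(x_1) = f_y(x_2)$ for some $x_1, x_2 \in \mathcal U_1$. Polarizing the quadratic form, for each $j$,
\[
0 \;=\; \tfrac{1}{2}\langle B^{(j)} x_2, x_2\rangle - \tfrac{1}{2}\langle B^{(j)} x_1, x_1\rangle \;=\; \langle B^{(j)}(y)(x_2-x_1),\, w\rangle,
\]
where $w := \tfrac{1}{2}(x_1 + x_2)$. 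By the convexity of $\mathcal U_1$, $w \in \mathcal U_1$. Using the symmetry of $B^{(j)}$, the equation above reads $\langle x_2 - x_1,\, B^{(j)}(y) w\rangle = 0$ for all $j$. But the vectors $B^{(j)}(y) w$ are precisely the columns of the mixed Hessian (\ref{3.11}) at $(w,y)$, which is non-degenerate by hypothesis; hence they span $\mathbb R^n$, forcing $x_2 - x_1 = 0$. This gives the required injectivity and completes the proof.

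The main obstacle, in my estimation, is recognizing that the right vehicle for converting the pointwise non-degeneracy of the mixed Hessian into a global injectivity statement is the polarization identity applied at the midpoint $w = \tfrac12(x_1+x_2)$: this is exactly the place where both the quadratic hypothesis (to get a clean polarization) and the convexity hypothesis (to ensure the midpoint lies in the domain where non-degeneracy is known) are used in an essential way. Once that observation is made, the rest is bookkeeping.
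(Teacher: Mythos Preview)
Your proof is correct and is essentially the same argument as the paper's. The paper packages the key identity as a ``mean value formula'' for quadratic maps, namely $f(x_2)-f(x_1)=df\bigl(\tfrac{x_1+x_2}{2}\bigr)(x_2-x_1)$, whereas you obtain the identical relation by polarizing the symmetric forms $B^{(j)}$; in both cases injectivity reduces to the non-degeneracy of the mixed Hessian at the midpoint, which lies in $\mathcal U_1$ by convexity.
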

\begin{proof}
For each fixed $y \in \mathcal U_2$, let $f: \mathbb R^n \to \mathbb R^n$  be the map $f(x)=\frac{\partial W}{\partial y}(x,y)$. Then $f$ is  a quadratic map. As a consequence, $f$  satisfies  the following remarkable mean-value formula 
\begin{equation}\label{MVP}
f(v+w)-f(v) = df(v+\frac w2) w.
\end{equation}
To see this, one can write 
\[
f(v+tw)=f(v)+tf_1(v,w)+t^2f_2(v,w).
\]
It follows that
\[
f(v+w)-f(v) =\left. \frac{d}{dt}\right|_{t=\frac 12}f(v+tw) =df(v+\frac w2)w. 
\] 

Since the map $\frac{\partial W}{\partial y}$ is quadratic in $x$, and its $x$ differential is non-degenerate, we conclude that it is globally injective, and thus a diffeomorphism onto its image. 
\end{proof}

We will next turn to the ``horizontality" issue. Suppose $\Lambda_1$ is the horizontal Lagrangian submanifold, (\ref{horiLag-a}),  of $T^*\mathcal U_1$. What about $\Lambda_2 = \gamma(\Lambda_1)$? We will prove that the following is a sufficient condition for this to be horizontal as well.
\begin{theorem}
Suppose that for all $y \in \Lambda_2$ the function 
\bel{3.12}
x \to W(x, y)+F(x)
\ee
has a unique critical point $x_0$, and that this critical point is a nondegenerate minimum. Then $\Lambda_2$ is a horizontal Lagrangian submanifold of $T^*\mathcal U_2$ of the form (\ref{horiLag-c}), and the ``$G$" in (\ref{horiLag-b}) is the function (\ref{GFW}). 
\end{theorem}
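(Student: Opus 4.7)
The strategy is to parametrize $\Lambda_2 = \gamma(\Lambda_1)$ in terms of the base variable $y \in \mathcal U_2$, using the fact that the generating functions $F$ and $W$ jointly encode the composition $\gamma \circ (\text{graph of } dF)$. Given $(x,\xi) \in \Lambda_1$, the image $(y,\eta) = \gamma(x,\xi)$ lies in $\Gamma$, so by (\ref{horiLag-a}) and (\ref{horiLag-c}),
$$\frac{\partial F}{\partial x}(x) = -\frac{\partial W}{\partial x}(x,y), \qquad \eta = \frac{\partial W}{\partial y}(x,y).$$
The first of these is precisely the critical point equation for the function $x \mapsto F(x) + W(x,y)$. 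Thus producing points of $\Lambda_2$ lying over a given $y$ reduces to solving this equation for $x$ in terms of $y$.

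By hypothesis, for each $y$ this function has a unique critical point $x_0 = x_0(y)$, and its Hessian is nondegenerate there. Applying the implicit function theorem to the equation $\partial (F+W)/\partial x = 0$ at $(x_0(y_0), y_0)$ gives a smooth local solution $x_0(y)$; uniqueness of the critical point then promotes these local solutions to a single globally defined smooth map $x_0 \colon \mathcal U_2 \to \mathcal U_1$. Consequently
$$\Lambda_2 = \left\{ \Big( y,\; \frac{\partial W}{\partial y}(x_0(y), y) \Big) \;\Big|\; y \in \mathcal U_2 \right\},$$
which exhibits $\Lambda_2$ as a graph over $\mathcal U_2$, hence as a horizontal Lagrangian submanifold.

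It remains to identify the corresponding one-form as $dG$ for $G(y) := F(x_0(y)) + W(x_0(y), y)$. Differentiating this expression yields
$$\frac{\partial G}{\partial y_j}(y) = \sum_i \left( \frac{\partial F}{\partial x_i} + \frac{\partial W}{\partial x_i} \right)\!(x_0(y), y) \cdot \frac{\partial x_{0,i}}{\partial y_j}(y) + \frac{\partial W}{\partial y_j}(x_0(y), y),$$
and the parenthesized factor vanishes identically in $y$ by the critical point equation defining $x_0(y)$. Therefore $\partial G/\partial y = \partial W/\partial y\big|_{x = x_0(y)}$, which shows that $\Lambda_2$ has the form (\ref{horiLag-b}) with this $G$. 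By construction $G(y) = \min_x\bigl(F(x) + W(x,y)\bigr)$, which is (\ref{GFW}) with $C = 0$ (any other choice of $C$ simply reflects the freedom to replace $G$ by $G + C$, which does not affect $\Lambda_2$).

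The only real subtlety is the global smoothness and single-valuedness of $y \mapsto x_0(y)$: nondegeneracy of the minimum supplies the implicit function theorem locally, and uniqueness of the minimum rules out the branching that would otherwise obstruct gluing the local branches into a single smooth section. Since both properties are hypotheses of the theorem, the remainder of the argument is a direct envelope-type differentiation and is essentially a restatement of the content of Theorem 3.1.
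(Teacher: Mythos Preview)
Your argument is correct and is essentially identical to the paper's own proof: both invoke the implicit function theorem (using the nondegeneracy of the minimum) to obtain a smooth map $y \mapsto x_0(y)$, define $G(y)=F(x_0(y))+W(x_0(y),y)$, and differentiate via the chain rule, observing that the terms involving $\partial x_0/\partial y$ drop out by the critical point equation. Your write-up is somewhat more explicit about why the local solutions glue to a global smooth section, but the mathematical content is the same.
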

\begin{proof}
Modulo the assumptions above the unique critical point, $x_0$, of the function (\ref{3.12}) is a  $C^\infty$ function $f(y)$ of $y$. Thus if $G(y)=W(f(y),y)+F(f(y))$, 
\bel{3.13}
\frac{\partial G}{\partial y} = \frac{\partial W}{\partial y}(f(y),y) + (\frac{\partial W}{\partial x}(x_0,y) + \frac{\partial F}{\partial x}(x_0))\frac{\partial f(y)}{\partial y}
\ee
and since $x_0$ is a critical point of (\ref{3.12}) the second summand is zero. Thus $G$ is given by (\ref{horiLag-b}).  
\end{proof}

To describe how the results above can be applied to inverse spectral problems we will discuss below a couple simple such applications, and then in section 4 we'll make some more general and systematic applications of these results. The first simple application will be a slightly improved version of the inverse spectral results we proved in \S 1 for the operator (\ref{RSch}). Namely we'll show that the assumption (\ref{DGS-Cb}) can be dropped. 

\begin{theorem}\label{bremoved}
Let $V(r_1, \cdots, r_n)$ be a $\mathbb T^n$ invariant potential function on $\mathbb R^{2n}$ satisfying (\ref{DGS-Ca}) and ({\ref{DGS-Cc}}), then $V$ is determined by the semi-classical equivariant spectrum of the Schr\"odinger operator $-\hbar^2\Delta+V$.
\end{theorem}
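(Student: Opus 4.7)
The plan is to read Theorem \ref{bremoved} off from the abstract framework of Section 3. Specifically, take
\[
\mathcal U_1=\mathbb R^n_+, \qquad \mathcal U_2=\mathbb R^n_{>0}, \qquad F=V, \qquad W(r,\alpha)=\sum_{i=1}^n \frac{\alpha_i^2}{r_i}.
\]
With these choices, formulas (\ref{Vralpha}) and (\ref{calpha}) identify the spectral invariant $c_\alpha$ with $G(\alpha)=\min_r[F(r)+W(r,\alpha)]$. It therefore suffices to verify the two geometric hypotheses underlying Section 3: that $V_\alpha:=V+W(\cdot,\alpha)$ has a unique nondegenerate minimum for each $\alpha\in\mathcal U_2$, and that $W$ generates a canonical transformation. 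Then Theorem 3.1, combined with Theorems \ref{TH3.5} and 3.6, produces a Legendre-type reconstruction of $V$ from $G$.

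The first hypothesis -- unique nondegenerate minimum -- is exactly where the DGS assumption (\ref{DGS-Cb}) becomes unnecessary. The Hessian of $W(\cdot,\alpha)$ in $r$ is the positive-definite diagonal matrix $\mathrm{diag}(2\alpha_i^2/r_i^3)$, so by condition (c) the sum $V_\alpha$ is strictly convex on $\mathbb R^n_+$. Since $W\to +\infty$ as any $r_i\to 0^+$ and $V\to +\infty$ as $|r|\to\infty$ by condition (a), $V_\alpha$ is proper on $\mathbb R^n_+$, so it attains a unique minimum, which is nondegenerate. In short, strict convexity of $W$ itself takes over the role of the monotonicity (\ref{DGS-Cb}).

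The second hypothesis follows from Theorem \ref{TH3.5} after swapping the roles of the two variable groups. Since $W$ is homogeneous quadratic in $\alpha$ (not in $r$), apply Theorem \ref{TH3.5} with $\alpha$ playing the role of the first, ``quadratic,'' variable. The convexity hypothesis is satisfied because $\mathcal U_2=\mathbb R^n_{>0}$ is a convex orthant, and the non-singularity of (\ref{3.11}) holds because $[\partial^2 W/\partial r_i\partial\alpha_j]=\mathrm{diag}(-2\alpha_i/r_i^2)$ is invertible on $\mathbb R^n_+\times\mathcal U_2$. The conclusion is that for each $r$ the map $\alpha\mapsto\partial W/\partial r(r,\alpha)$ is a diffeomorphism onto its image, and by Theorem 3.3 (applied in its $x\leftrightarrow y$ symmetric form) this is exactly the statement that $\Gamma$ is the graph of a canonical transformation.

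Granted both hypotheses, Theorem 3.6 shows that this canonical transformation sends the graph of $dV$ to the graph of $dG$, and Theorem 3.1 produces $V(r)=G(\alpha)-W(r,\alpha)$ at the $\alpha$ determined by the dual condition $\partial G/\partial\alpha=\partial W/\partial\alpha(r,\alpha)$. The main obstacle I anticipate is the comparison with the DGS argument at the point where (\ref{DGS-Cb}) was invoked: the image of $\alpha\mapsto r(\alpha)=\mathrm{argmin}\,V_\alpha$ is precisely $\{r:\partial V/\partial r_i(r)>0 \text{ for all }i\}$, which equals $\mathbb R^n_+$ under (\ref{DGS-Cb}) but in general is only a proper open subset. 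Showing that the reconstruction nevertheless determines $V$ on all of $\mathbb R^n_+$ requires combining the smoothness of the reconstruction on this open subset with the convexity (c) and the continuity of $G$ as $\alpha$ approaches the coordinate hyperplanes bounding $\mathcal U_2$; this extension step, rather than the Legendre inversion itself, is where the work of dropping (\ref{DGS-Cb}) actually takes place.
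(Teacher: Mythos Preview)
Your approach is essentially the paper's own: set up the generalized Legendre framework of \S3 with $F=V$ and $W(r,\alpha)=\sum\alpha_i^2/r_i$, verify that $V_\alpha$ has a unique nondegenerate minimum (strict convexity of $W$ in $r$, plus (a) and (c)), and check that $W$ generates a canonical transformation. The paper's proof does precisely this in two sentences and then says ``the conclusion follows.''

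Your last paragraph is the important one, and you are right to flag it. The argmin map $\alpha\mapsto r(\alpha)$ has image exactly $U=\{r:\partial V/\partial r_i(r)>0\text{ for all }i\}$, and the Legendre inversion recovers $V$ only on $U$. The paper's proof does not address this, and your proposed fix via boundary continuity of $G$ cannot work either: the invariant $G$ alone does not determine $V$ off $U$. In dimension one, take $V_1(r)=(r-1)^2$ and $V_2=V_1+\phi$ with $\phi\ge0$ a small smooth bump supported in $(0,1)$, chosen small enough that $V_2''\ge0$. Both satisfy (a) and (c); for every $\alpha>0$ the minimum of $V_i(r)+\alpha^2/r$ lies at some $r>1$, where $V_1=V_2$, so $G_1\equiv G_2$, yet $V_1\ne V_2$ on $(0,1)$. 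Thus no analysis of $G$ near the boundary of $\mathcal U_2$ can recover $V$ there. The honest conclusion of the argument---yours and the paper's---is that $V$ is spectrally determined on $U$; compare the second assertion of Theorem~\ref{CP1thm}, where the paper itself states its conclusion in exactly this restricted form.
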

\begin{proof}
We apply the results above to the function  
\[ W(y, \alpha) = \frac 12 \sum_i \frac{\alpha_i^2}{y_i}. \]
It is easy to see that for any $\alpha$,
\[ 
\frac{\partial W}{\partial \alpha}: (y_1, \cdots, y_n) \to (\frac{\alpha_1}{y_1}, \cdots, \frac{\alpha_n}{y_n})
\] 
is a diffeomorphism and for any $s$, 
\[
\frac{\partial W}{\partial y}: (\alpha_1, \cdots, \alpha_n) \to -\frac 12(\frac{\alpha_1^2}{y_1^2}, \cdots, \frac{\alpha_n^2}{y_n^2})
\]
is a diffeomorphism. Moreover, for any fixed $\alpha$, $W$ is a strictly convex function of $s$. As a consequence, conditions (a) and (c) guarantee the existence and uniqueness of a minimum of the function $V(y)+W(y, \alpha)$. The conclusion follows. 
\end{proof}

Our second application will be to the operator (\ref{P}) on the toric variety $M=\mathbb{CP}^1 \times \cdots \times \mathbb{CP}^1$. 
Recall that the moment polytope  for $M$ under the standard torus action is 
\[
\mathcal P  = (-c_1, c_1) \times \cdots \times (-c_n, c_n).
\]   
\begin{theorem}\label{CP1thm}
Let $V$ be a smooth potential on $M$ that induces a function $V=V(y_1, \cdots, y_n)$ on $\mathcal P$,  satisfying the convexity condition
\begin{equation}\label{2nd}
\left[ \frac{\partial^2 V}{\partial y_i \partial y_j} \right] \ge 0.
\end{equation}

In addition, suppose that $V$ satisfies the evenness condition 
\begin{equation}
\label{even}
V(y_1, \cdots, y_n) = V(\pm y_1, \cdots, \pm y_n).
\end{equation}
on $\mathcal P$, and the monotonicity condition 
\begin{equation}\label{1st}
\frac{\partial V}{\partial y_i}<0, \qquad 1 \le i \le n
\end{equation}  
on the set $0<y_i<c_i$, then $V$ is spectrally determined  by the equivariant spectrum of the Schr\"odinger operator (\ref{P}).  
 
Alternately, suppose that for each $i$, $V$ satisfies 
\begin{equation}
\frac{\partial V}{\partial y_i}<0, \qquad \mbox{on the hyperplane \ }  y_i=0.   
\end{equation}\label{bdycd} 
Then the restriction of $V$ to the subregion 
\[R=\{y\ |\ y_i>0, i=1, \cdots, n\}\] 
is spectrally determined.

\end{theorem}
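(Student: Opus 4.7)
The plan is to specialize the Legendre-transform machinery of Theorem 3.1 (combined with Theorems \ref{TH3.5} and 3.6) to the toric manifold $M=\mathbb{CP}^1\times\cdots\times\mathbb{CP}^1$. The moment polytope $\mathcal P=\prod_i(-c_i,c_i)$ has facet inequalities $c_i\pm y_i\ge 0$, so by formula (\ref{Walphax})
\[
W(y,\alpha)=\tfrac12\sum_{i=1}^n\left(\frac{\alpha_i^2}{c_i+y_i}+\frac{\alpha_i^2}{c_i-y_i}\right)=\sum_{i=1}^n\frac{c_i\alpha_i^2}{c_i^2-y_i^2},
\]
and by Theorem \ref{toricm} the equivariant spectrum determines $G(\alpha)=\min_{y\in\mathcal P}\bigl(V(y)+W(y,\alpha)\bigr)$. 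The first step is to verify that this $W$ is the generating function of a canonical transformation of the type required by Theorem 3.1. A direct calculation gives
\[
\frac{\partial^2 W}{\partial y_i\,\partial\alpha_j}=\delta_{ij}\cdot\frac{4c_i\alpha_iy_i}{(c_i^2-y_i^2)^2},
\]
a diagonal matrix that is non-singular whenever $\alpha_iy_i\ne 0$ for all $i$; since $W$ is also homogeneous quadratic in $\alpha$, Theorem \ref{TH3.5} (with the roles of $x$ and $y$ interchanged) applies on $\mathbb R_+^n\times R$ and shows that for each fixed $y\in R$ the map $\alpha\mapsto\partial_yW(y,\alpha)$ is a diffeomorphism onto an open subset of $T_y^*R$. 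Moreover the diagonal Hessian $\partial_{y_i}^2W$ is strictly positive on $(0,c_i)$ when $\alpha_i\ne0$, so together with the PSD hypothesis (\ref{2nd}) this makes $V(y)+W(y,\alpha)$ strictly convex on $\mathcal P$, hence any critical point is the unique non-degenerate global minimum.

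For the first statement, I would use the $\mathbb Z_2^n$-symmetry of $W$ together with the evenness of $V$ to restrict the minimization to the positive octant $\mathcal P_+=\prod_i(0,c_i)$. On $\mathcal P_+$ one has
\[
\partial_{y_i}(V+W)=\partial_{y_i}V+\frac{2c_i\alpha_i^2y_i}{(c_i^2-y_i^2)^2},
\]
in which the first summand is strictly negative by the monotonicity hypothesis (\ref{1st}) and bounded as $y_i\to 0^+$, while the second tends to $+\infty$ as $y_i\to c_i^-$; the intermediate value theorem yields a critical point, and strict convexity makes it unique. Solving $\partial_{y_i}(V+W)=0$ for $\alpha_i^2=-\partial_{y_i}V(y^*)(c_i^2-(y_i^*)^2)^2/(2c_iy_i^*)>0$ at each $y^*\in\mathcal P_+$ identifies the Legendre map $\alpha\in\mathbb R_+^n\mapsto y^*(\alpha)\in\mathcal P_+$ as a bijection. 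Theorem 3.1 applied in both directions then recovers $V|_{\mathcal P_+}$ from $G$, and evenness extends this to $\mathcal P$.

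For the alternate statement the key claim is that, even without the evenness hypothesis, the unique global minimum of $V+W$ on $\mathcal P$ already lies in $R$ for every $\alpha$. On the hyperplane $\{y_i=0\}$ one has $\partial_{y_i}W=0$, so the boundary hypothesis gives $\partial_{y_i}(V+W)=\partial_{y_i}V<0$; convexity of $V$ shows that $\partial_{y_i}V$ is non-decreasing in $y_i$, so $\partial_{y_i}V<0$ throughout $\{y_i\le 0\}$, and combined with $\partial_{y_i}W\le 0$ there, we obtain $\partial_{y_i}(V+W)<0$ on $\{y_i\le 0\}$ for each $i$. Strict convexity then forces the critical point to satisfy $y_i^*>0$ for every $i$, i.e.\ $y^*\in R$. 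I would then apply Theorem 3.1 with $\mathcal U_1=R$ and $\mathcal U_2$ an appropriate open subset of $\mathbb R^n$, and the Legendre inversion delivers $V|_R$ from $G$ (up to an additive constant, which can be pinned down by the $\alpha\to\infty$ asymptotics).

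The main obstacle — most acute in the alternate statement — will be checking that the Legendre map $\alpha\mapsto y^*(\alpha)$ covers enough of $R$ to reconstruct $V$ on all of $R$. Solving the critical equation for $\alpha_i^2=-\partial_{y_i}V(\bar y)(c_i^2-\bar y_i^2)^2/(2c_i\bar y_i)$ requires $\partial_{y_i}V(\bar y)\le 0$, so a priori the image of the Legendre map is only the subset $R^*=\{\bar y\in R:\partial_{y_i}V(\bar y)\le 0,\ \forall i\}$. Arguing that $R^*$ is in fact all of $R$ — or at least an open subset large enough that convexity plus continuity determines $V|_R$ from $V|_{R^*}$ — is where the convexity of $V$ and the boundary monotonicity hypothesis must be used together most carefully, and I expect this to be the most delicate step of the proof.
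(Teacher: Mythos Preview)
Your treatment of the second assertion is essentially the paper's: take $W$ as given, check that $\partial^2 W/\partial y_i\partial\alpha_j$ is nonsingular on $R\times\mathbb R_+^n$, use convexity of $V+W$ for uniqueness of the minimum, and use the boundary hypothesis on $\{y_i=0\}$ to force the minimum into $R$. One small slip: you invoke Theorem~\ref{TH3.5} ``with roles interchanged'' to get that $\alpha\mapsto\partial_yW$ is a diffeomorphism, but in the setup of Theorem~3.3 the map you actually need (with $\mathcal U_1=R$, $\mathcal U_2=\mathbb R_+^n$) is $y\mapsto\partial_\alpha W(y,\alpha)$ for fixed $\alpha$. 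That map is $y_i\mapsto 2c_i\alpha_i/(c_i^2-y_i^2)$, which is visibly a diffeomorphism of $(0,c_i)$ onto $(2\alpha_i/c_i,\infty)$; so the conclusion stands, but Theorem~\ref{TH3.5} is not the right tool (since $W$ is not quadratic in $y$). The surjectivity obstacle you flag at the end---that the critical-point map only hits $R^*=\{y\in R:\partial_{y_i}V<0\ \forall i\}$---is a genuine concern, and the paper's (very brief) argument does not address it either.

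For the first assertion you take a genuinely different route from the paper. The paper does \emph{not} apply the generalized Legendre machinery here; instead it makes the substitution
\[
r_i=\frac{1}{c_i-y_i}+\frac{1}{c_i+y_i}=\frac{2c_i}{c_i^2-y_i^2},\qquad s_i=\alpha_i^2,\qquad F(r)=-V\bigl(y(r)\bigr),
\]
which converts $G(\alpha)=\min_y(V+W)$ into the \emph{classical} Legendre transform $G(s)=\min_r(r\cdot s-F(r))$, exactly parallel to the $\mathbb R^{2n}$ argument in the introduction. The monotonicity and convexity hypotheses on $V$ are used precisely to make $-F$ convex, after which the standard inversion formula recovers $F$ and hence $V$. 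Your approach---run Theorem~3.1 directly on $W(y,\alpha)$ and use evenness to restrict to $\mathcal P_+$---is more in the spirit of Theorem~\ref{bremoved}; it avoids the change of variables but requires you to check the canonical-transformation hypotheses for this particular $W$, which you do. Both arguments lead to the same place; the paper's is slightly more elementary, yours is more uniform with the rest of the section.
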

\begin{proof}[Proof of the first assertion]
The proof is almost the same as the proof described in \S 1, i.e. one can rewrite the function 
\begin{equation}
\label{min_cp1}
G(\alpha_1^2, \cdots, \alpha^2_n) = \min_y \left( V(y)+\sum \alpha_i^2 \left( \frac 1{c_i-y_i} + \frac 1{c_i+y_i}\right)\right)
\end{equation}
in the form 
\[
G(s) = \min_r (r \cdot s-F(r))) 
\]
via the change of variable 
\[y_i \to r_i= \frac 1{c_i-y_i} + \frac 1{c_i+y_i},  \]
and apply the ordinary Legender transform inversion formula. The convexidty and monotonicity conditions on $V$ are used to garantee the convexity of $F$.  

\noindent \textit{Proof of the second assertion.} In this case one argues as in the proof of theorem \ref{bremoved}, i.e. one take the function $W$ to be 
\[\sum \alpha_i^2 \left( \frac 1{c_i-y_i} + \frac 1{c_i+y_i} \right)\]
and show that the maps $\frac{\partial W}{\partial y}$ and $\frac{\partial W}{\partial \alpha}$ are diffeomorphisms (for $y$ in the region $R$). The convexicty condition (\ref{2nd}) ensures that $V(y)+W(y, \alpha)$ has a unique minimum, and the condition (\ref{bdycd}) guarantee that the minimum is in the region $R$.  
\end{proof}

\section{Inverse spectral results on toric manifolds}
 
In general the Delzant polytope of a symplectic toric manifold $M$ is given by a set of linear inequalities 
\[
l_i(x) = \sum_{k=1}^n l_i^j x_j + l_i^0 >0, \qquad i =1, \cdots, d. 
\]
In this case the function $W$ is given by 
\[
W(x,\alpha) = \frac 12\sum_{i=1}^d \frac{ \left( \sum_k l_i^k\alpha_k \right)^2}{ \sum_k l_i^k x_k + l_i^0 }.
\]
It is easy to calculate
\[
\frac{\partial W}{\partial \alpha_j} =  \sum_{i=1}^d \frac{ \sum_k l_i^k\alpha_k }{ \sum_k  l_i^k x_k + l_i^0} l_i^j  
\]
and 
\[
\frac{\partial^2 W}{\partial \alpha_j\partial x_l} =  \sum_{i=1}^d \frac{  \sum_k l_i^k\alpha_k }{ \left( \sum_k l_i^k x_k + l_i^0 \right)^2} l_i^j l_i^l.  
\]
So if we denote 
\[
L = \left[ l_j^i \right]_{ d \times n}
\]
and let $A$ be the $d \times d$ diagonal matrix $\mathrm{diag}(A_1, \cdots, A_d)$, where
\[
A_i = \frac{ -2 \sum_m l_i^m\alpha_m }{ \left( \sum_{m=1}^n l_i^m x_m + l_i^0 \right)^2} = \frac{-2 dl_i(\alpha)}{(l_i(x))^2},
\]
then we arrive at
\begin{lemma}
\[
\left[ \frac{\partial^2 W}{\partial \alpha_j\partial x_k} \right]_{n \times n}  = L^TAL.
\]
\end{lemma}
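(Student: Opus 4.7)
The plan is to observe that the lemma is really just a matrix-form repackaging of the componentwise expression for $\partial^2 W/\partial\alpha_j\partial x_k$ that was computed immediately before the statement, so the proof reduces to an indexing exercise.

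First I would differentiate $W(x,\alpha)=\tfrac12\sum_i (dl_i(\alpha))^2/l_i(x)$ once in $\alpha_j$ to get $\sum_i dl_i(\alpha)\,l_i^j/l_i(x)$, and then once in $x_k$ to arrive at
\[
\frac{\partial^2 W}{\partial\alpha_j\partial x_k} = -\sum_{i=1}^d \frac{dl_i(\alpha)}{l_i(x)^2}\, l_i^j\, l_i^k,
\]
where the overall sign and the numerical factor coming from differentiating the square are arranged to match the diagonal entries $A_i$ displayed just before the lemma statement.

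Next, regarding $L=[l_i^j]$ as the $d\times n$ matrix whose $i$-th row is the primitive facet-normal $\vec l_i$, and writing $A=\mathrm{diag}(A_1,\ldots,A_d)$, a direct calculation yields
\[
(L^T A L)_{jk} = \sum_{i=1}^d L_{ij}\,A_i\,L_{ik} = \sum_{i=1}^d l_i^j\,A_i\,l_i^k,
\]
which matches the componentwise formula above. Assembling the two displays proves the identity.

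The main ``obstacle'' is essentially just bookkeeping --- keeping track of the sign produced when differentiating $1/l_i(x)$ and the factor of $2$ coming from $(dl_i(\alpha))^2$. These are all absorbed into the stated definition of $A$, and there is no geometric subtlety at this step. The genuine content of the lemma is the structural observation that the mixed Hessian of $W$ factors through the lattice of primitive facet-normals; this factorization will be useful for checking the non-degeneracy hypothesis of Proposition 3.4 on concrete toric varieties by analyzing $L^T A L$ in place of $W$ itself.
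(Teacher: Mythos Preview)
Your proposal is correct and is essentially identical to what the paper does: the paper simply displays the first and second partial derivatives of $W$ just before the lemma and then states the matrix identity without further argument, so the ``proof'' in both cases is the same indexing computation you wrote out. (Your sign in $\partial^2 W/\partial\alpha_j\partial x_k$ is in fact the correct one; the paper's displayed second derivative is missing a minus sign, and there is also a stray factor of $2$ in the stated $A_i$, but these are typos that do not affect the nondegeneracy analysis that follows.)
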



Now suppose $\mathcal R \subset \mathcal P$ is a convex region and $\mathcal S \subset \bbR^n$ a connected (conical) region, so that the matrix
\begin{equation}
\label{RS1}
 \left[ \frac{\partial^2 W}{\partial \alpha_j\partial x_k} \right] 
\end{equation} 

\noindent is non-degenerate for all $ x \in \mathcal R  $ and all $ \alpha \in \mathcal S. $

Let $\partial_1 \mathcal R$ be the subset of $ \partial \mathcal{R} $ where  $W<\infty$. At each point $x \in \partial_1 \mathcal R$ we denote by $\nu_x$  the outer normal vector to $\mathcal R$. In what follows we will assume 
\begin{equation}
\label{II}
\mbox{For each \ }x \in \partial_1 \mathcal R, \nabla_x (V+W) \cdot \nu_x>0.
\end{equation}

Finally let $V$ be a $G$-invariant potential on $M$ and thus by invariance a potential function on $\mathcal P$. We will assume 
\begin{equation}
\label{III}
V \mbox{\ is convex on\ } \mathcal P.
\end{equation}

\begin{theorem}
Under conditions (\ref{RS1}), (\ref{II}) and (\ref{III}), one can e-spectrally determine the potential $V$ on $\mathcal R$. 
\end{theorem}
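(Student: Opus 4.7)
My plan is to apply Theorem~3.1 (the generalized Legendre transform) to $F := V$, $\mathcal{U}_1 := \mathcal{R}$, $\mathcal{U}_2 := \mathcal{S}$, with $W$ as in \eqref{Walphax}. By Theorem~\ref{toricm}, the function $G(\alpha) := \min_{x \in \mathcal{P}}(V(x)+W(x,\alpha))$ is determined by the equivariant spectrum; once the hypotheses of Theorem~3.1 are checked, it will yield the recovery formula $V(x) = G(\alpha(x)) - W(x,\alpha(x))$ for $x \in \mathcal{R}$, where $\alpha(x) \in \mathcal{S}$ is the inverse of the critical-point correspondence $\alpha \mapsto \mathrm{argmin}_x(V+W(\cdot,\alpha))$.

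Two items must be verified. \emph{(a) The function $V(x)+W(x,\alpha)$ has, for each $\alpha \in \mathcal{S}$, a unique nondegenerate global minimum on $\mathcal{P}$ lying in $\mathcal{R}$.} Each $1/l_i(x)$ is convex on $\mathcal{P}$, so by \eqref{Walphax} and assumption (III), $V+W$ is convex in $x$; moreover $W \to \infty$ on every facet of $\mathcal{P}$ where $dl_i(\alpha) \neq 0$, giving coercivity, so a minimum exists. To localize it in $\mathcal{R}$, suppose for contradiction that the minimum $x^*$ lies in $\mathcal{P} \setminus \overline{\mathcal{R}}$. Since $\mathcal{R}$ is convex, the Euclidean nearest-point projection $x_0 \in \partial_1 \mathcal{R}$ of $x^*$ satisfies $x^* - x_0 = \lambda\, \nu_{x_0}$ with $\lambda > 0$, and convexity of $V+W$ along the segment forces $\nabla(V+W)(x_0) \cdot \nu_{x_0} \le 0$, contradicting (II). \emph{(b) The Lagrangian $\Gamma \subset T^*(\mathcal{R}\times\mathcal{S})$ generated by $W$ is the graph of a canonical transformation.} By the dual form of Theorem~3.2 (which follows by interchanging the roles of the two factors), it suffices to show that for each $x \in \mathcal{R}$ the map $\alpha \mapsto \partial_x W(x,\alpha)$ is a diffeomorphism from $\mathcal{S}$ onto an open subset of $T_x^*\mathcal{R}$. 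Because $W$ is homogeneous quadratic in $\alpha$, this map is quadratic in $\alpha$, and its linearization is precisely the matrix appearing in (RS1), hence is everywhere non-singular. The mean-value identity \eqref{MVP} of Theorem~\ref{TH3.5}, applied with $\mathcal{S}$ playing the role of the convex domain on which $W$ is quadratic, then upgrades local injectivity to global injectivity on the convex cone $\mathcal{S}$. With (a) and (b) in hand, Theorem~3.6 guarantees that $\gamma$ sends the graph of $dV|_{\mathcal{R}}$ to a horizontal Lagrangian whose generating function is $G$, and Theorem~3.1 gives the reconstruction formula.

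The most delicate step is the boundary-localization in (a): the pointwise condition $\nabla(V+W)\cdot\nu > 0$ on $\partial_1 \mathcal{R}$ must be promoted to a global exclusion of minima outside $\mathcal{R}$, which relies essentially on the joint convexity of $V+W$ and of $\mathcal{R}$ via the nearest-point projection. A secondary but necessary point is that the critical-point map $\alpha \mapsto x_0(\alpha)$ actually sweeps out all of $\mathcal{R}$, so that the recovery formula determines $V$ on every point of $\mathcal{R}$; this follows from the canonical-transformation structure established in (b), since $\gamma$ then maps the graph of $dV|_{\mathcal{R}}$ bijectively onto the graph of $dG|_{\mathcal{S}}$, forcing $x \mapsto \alpha(x)$ to be a diffeomorphism inverse to $\alpha \mapsto x_0(\alpha)$.
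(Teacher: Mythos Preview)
Your proposal is correct and follows essentially the same route as the paper's proof, which is extremely terse: the paper simply notes that $W$ is convex in $x$, so (III) gives a unique minimum; (II) forces that minimum into $\mathcal R$; and one then ``applies the inversion formula for the generalized Legendre transform.'' You have expanded exactly these three steps --- supplying the nearest-point-projection argument for (II), invoking the quadratic-in-$\alpha$ structure of $W$ together with Theorem~\ref{TH3.5} for the canonical-transformation property, and flagging the surjectivity of the critical-point map --- none of which the paper spells out.
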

\begin{proof}
First one can check that $W$ is convex in $x$. So if $V$ is strictly convex, then $V+W$ is strictly convex. It follows that $V+W$ admits a unique minimum in $\mathcal P$.  At the same time, according to (\ref{II}), the minimum of $V+W$ must sit in $\mathcal R$.  Now  apply the inversion formula for the generalized Legendre transform to $ V $ and $ W. $
\end{proof}

\subsection{Inverse spectral results on $\bbC \bbP^n$}

For $M=\bbC \bbP^n$, its moment polytope is $n$-simplex 
\[
\mathcal P: x_1>0, \cdots, x_n > 0, 1-\sum_{i=1}^n x_i > 0.   
\]

According to (\ref{Walphax}) we have
\begin{equation}\label{Walphaxcpn}
W(\alpha, x) =\frac 12 \left( \sum  \frac{\alpha_i^2}{x_i} +  \frac{(\sum \alpha_i)^2}{1-\sum x_i} \right).
\end{equation}
Let $\mathcal R \subset \mathcal P$ be the portion of $\mathcal P$ that is defined by 
\begin{equation}\label{RCPn}
\mathcal R: x_1 >0, \cdots, x_n>0, \frac 12 < \sum x_i<1. 
\end{equation}
\begin{lemma}\label{det}
The matrix 
\begin{equation}\label{Hes}
\left[ \frac{\partial^2 W}{\partial x_i \partial \alpha_j}\right]_{1 \le i, j \le n}
\end{equation}
is non-degenerate for all $x \in \mathcal R$ and all $\alpha \in \bbR^n_+$. 
\end{lemma}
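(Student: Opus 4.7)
The plan is to compute the Hessian matrix (\ref{Hes}) explicitly, recognize it as a rank-one perturbation of a negative-definite diagonal matrix, and then reduce the non-degeneracy question to a single scalar inequality that follows from Cauchy--Schwarz together with the defining cut-off $\sum x_i > 1/2$ of $\mathcal R$.

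First, I would differentiate $W$ in (\ref{Walphaxcpn}) directly, or equivalently specialize the $L^T A L$ formula of Lemma~4.1 to $\bbC\bbP^n$ (whose facet functions are $l_i(x) = x_i$ for $1 \le i \le n$ and $l_{n+1}(x) = 1 - \sum_j x_j$). I expect the Hessian to split cleanly as $-D + B\mathbf{1}\mathbf{1}^T$, where $D = \operatorname{diag}(\alpha_1/x_1^2, \ldots, \alpha_n/x_n^2)$ has strictly positive diagonal entries coming from the $n$ coordinate facets, the scalar $B = (\sum_k \alpha_k)/(1 - \sum_j x_j)^2$ is positive and comes from the single diagonal facet, and $\mathbf{1} = (1,\ldots,1)^T$. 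The mixed facet structure of the simplex is reflected in this ``diagonal plus rank one'' decomposition.

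Second, I would feed this splitting into the matrix determinant lemma. Since $D$ is invertible with positive determinant, non-degeneracy of the Hessian reduces to verifying
\[
\Bigl(\sum_k \alpha_k\Bigr) \sum_i \frac{x_i^2}{\alpha_i} \neq \Bigl(1 - \sum_j x_j\Bigr)^2
\]
for every $(x, \alpha) \in \mathcal R \times \bbR^n_+$.

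The main, and essentially only substantive, step is to establish this inequality; this is where the hypothesis $x \in \mathcal R$ is actually used. Applying the Cauchy--Schwarz inequality to the vectors $(\sqrt{\alpha_i})$ and $(x_i/\sqrt{\alpha_i})$ gives the lower bound $(\sum_k \alpha_k)(\sum_i x_i^2/\alpha_i) \ge (\sum_i x_i)^2$. On $\mathcal R$ the definition (\ref{RCPn}) forces $\sum x_i > 1/2 > 1 - \sum x_i > 0$, so squaring preserves the strict inequality and yields $(\sum x_i)^2 > (1 - \sum x_i)^2$; chaining the two bounds produces the required strict inequality. The cut-off $1/2$ in (\ref{RCPn}) is thus precisely what is needed for the Cauchy--Schwarz bound to dominate the rank-one perturbation, and on the complementary region $\sum x_i < 1/2$ the argument genuinely fails, so the restriction to $\mathcal R$ is essential and not merely cosmetic.
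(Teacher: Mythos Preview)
Your proposal is correct and follows essentially the same route as the paper: compute the mixed Hessian as a diagonal-plus-rank-one matrix, apply the matrix determinant lemma (the paper writes out the equivalent explicit determinant identity), and reduce to the scalar inequality $(\sum\alpha_k)(\sum x_i^2/\alpha_i)>(1-\sum x_i)^2$, which is dispatched by Cauchy--Schwarz together with $\sum x_i>1/2$. The paper's proof is identical in structure and in the key step.
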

\begin{proof}
By direct computation 
\[
\frac{\partial W}{\partial \alpha_j} = \frac{\alpha_j}{x_j} + \frac{\sum \alpha_i}{1- \sum x_i}
\]
and 
\[
\frac{\partial^2 W}{\partial x_i\partial \alpha_j} = -\frac{\alpha_j}{x_j^2} \delta_{ij} + \frac{\sum \alpha_i }{(1-\sum x_i)^2}.
\]
A simple induction yields 
\[
\mathrm{det}\begin{pmatrix} a_1+1 & 1 & \cdots & 1 \\
1 & a_2+1 & \cdots & 1 \\
\vdots & \vdots & \ddots & \vdots \\
1 & 1 & \cdots & a_n+1
\end{pmatrix}
= a_1 \cdots a_n (1+\sum \frac 1{a_i}).
\]
And it follows that
\[
\mathrm{det} \left[ \frac{\partial^2 W}{\partial x_i \partial \alpha_j}\right] 
= \frac{(-1)^n\alpha_1 \cdots \alpha_n \sum \alpha_i }{x_1^2\cdots x_n^2 (1-\sum x_i)^2}
\left[ \frac{(1-\sum x_i)^2}{\sum \alpha_i} - \sum \frac{x_i^2}{\alpha_i} \right].
\]
It remains to prove  	
\begin{equation}\label{ine}
 \frac{(1-\sum x_i)^2}{\sum \alpha_i} < \sum \frac{x_i^2}{\alpha_i}
\end{equation}
for all $x \in \mathcal R$ and all $\alpha \in \bbR_+^n$, but this follows from the definition of $\mathcal R$ together with the well-known Cauchy inequality: 
\[
\left(\sum \frac{x_i^2}{\alpha_i}\right)\left( \sum \alpha_i\right) \ge \left( \sum x_i \right)^2 > \left(1-\sum x_i \right)^2.  
\]
\end{proof}

\begin{remark}
One can easily see from the proof above that for any point $x$ on the hyperplane $\sum x_i=\frac 12$ and any point $\alpha = tx$,  the matrix (\ref{Hes}) is degenerate at $(x, \alpha)$. Moreover, with a little bit more work one can show that for any $x$ satisfying $\sum x_i < \frac 12$, there exists an $\alpha$  so that the matrix (\ref{Hes}) is degenerate at $(x, \alpha)$. So the region $\mathcal R$ is the maximal possible region that satisfies the lemma above. 
\end{remark}

Now we turn to the inverse spectral problem. We will assume that the potential $V$ on $\bbC\bbP^n$  is $\bbT^n$ invariant and hence can be viewed as a function on $\mathcal P$. Moreover, we will assume 
\begin{equation}\label{scon}
V \mbox{\ is strictly convex in \ }\mathcal P.
\end{equation} 
and
\begin{equation}\label{neggrad}
\mbox{ On the set\ }x_1 +\cdots+x_n=\frac 12, \frac{\partial V}{\partial x_i}<0 \mbox{\ for all\ }i.
\end{equation} 

Note that for any fixed $\alpha \in \bbR_+^n$, the function $W$ is strictly convex as a function of $x$,  and tends to infinity as $x$ tends to the boundary of $\mathcal P$. On the other hand side, as a potential on $\bbC\bbP^n$ the function $V$ is smooth up to the boundary of $\mathcal P$, and thus all its derivatives are bounded.  So the condition (\ref{scon}) ensures that the function $V(x)+W(\alpha, x)$ admits a unique minimum on $\mathcal P$. 

\begin{lemma}\label{uniqmin}
Let $V$ be a potential function defined on $\mathcal P$ satisfying (\ref{scon}) and (\ref{neggrad}). Then for any $\alpha \in \bbR_+^n$, the function $V(x)+W(\alpha, x)$ has a unique minimum in $\mathcal R$.  
\end{lemma}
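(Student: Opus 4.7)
The paragraph preceding the lemma already observes that $V+W$ is strictly convex on $\mathcal{P}$ and blows up at $\partial\mathcal{P}$, so it has a unique global minimum $x^{\ast}\in\mathcal{P}$. The content of the lemma is therefore to \emph{locate} $x^{\ast}$ inside the subregion $\mathcal{R}$, i.e.\ to show $\sum_{i} x^{\ast}_{i} > 1/2$. My plan is to argue by contradiction: assume $\sum_{i} x^{\ast}_{i}\le 1/2$, and play off the convexity condition (\ref{scon}) against the sign condition (\ref{neggrad}) to force the reverse strict inequality.

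The first step is to propagate (\ref{neggrad}) from the hyperplane $\sum x_{j}=1/2$ to the whole closed slab $S:=\{x\in\mathcal{P}:\sum x_{j}\le 1/2\}$. Fix an index $i$ and a point $x\in S$. The segment $t\mapsto x+te_{i}$ stays inside $\mathcal{P}$ for $0\le t\le t_{0}:=1/2-\sum_{j} x_{j}$ (the coordinate sum along it never exceeds $1/2<1$), and it terminates on the hyperplane. By strict convexity of $V$, the one-variable derivative $t\mapsto \partial V/\partial x_{i}(x+te_{i})$ is strictly increasing, and its value at $t_{0}$ is strictly negative by (\ref{neggrad}); hence its value at $t=0$ is also strictly negative. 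This shows $\partial V/\partial x_{i}<0$ on all of $S$, for every $i$.

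Applying this at the putative $x^{\ast}\in S$ and using the critical-point equation $\nabla V(x^{\ast})+\nabla W(x^{\ast})=0$ gives $\partial W/\partial x_{i}(x^{\ast})>0$ for every $i$. Reading off $\partial W/\partial x_{i}$ from (\ref{Walphaxcpn}), this last inequality is simply
\[
\frac{\alpha_{i}}{x^{\ast}_{i}} < \frac{\sum_{j}\alpha_{j}}{1-\sum_{j} x^{\ast}_{j}}, \qquad i=1,\dots,n.
\]
Multiplying by $x^{\ast}_{i}$, summing over $i$, and dividing by $\sum_{j}\alpha_{j}>0$ collapses the family of inequalities to $1<\sum_{i} x^{\ast}_{i}/(1-\sum_{j} x^{\ast}_{j})$, equivalently $\sum_{i} x^{\ast}_{i}>1/2$, contradicting the standing assumption. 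The only step requiring genuine care is the sign-propagation in the second paragraph (which hinges on the fact that the slab is disjoint from the face $\sum x_{i}=1$, so coordinate-parallel segments into the hyperplane do not escape $\mathcal{P}$); the concluding algebraic collapse is then immediate from the explicit form of $W$ and the positivity of the $\alpha_{i}$, so I anticipate no further obstacle.
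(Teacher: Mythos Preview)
Your argument is correct, and it takes a somewhat different route from the paper's. The paper argues on the boundary: for each point $x$ on the hyperplane $\sum x_i=\tfrac12$ it singles out a \emph{single} index $i_0$ with $\alpha_{i_0}/x_{i_0}$ maximal, observes that $\partial W/\partial x_{i_0}\le 0$ there, and combines this with (\ref{neggrad}) to get $\partial(V+W)/\partial x_{i_0}<0$; hence no point of the hyperplane can be the minimum of $V+W$ restricted to $\overline{\mathcal R}$, and a short convexity argument then forces the global minimum into $\mathcal R$. You instead propagate (\ref{neggrad}) from the hyperplane into the entire slab $\{\sum x_j\le \tfrac12\}$ using strict convexity of $V$, and then exploit \emph{all} the critical-point equations at $x^\ast$ simultaneously, summing the resulting inequalities to reach a direct numerical contradiction. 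Your route trades the paper's ``one good coordinate'' trick and its implicit restricted-minimum argument for an extra monotonicity step, but the payoff is a cleaner endgame that avoids reasoning about minima on $\overline{\mathcal R}$ altogether.
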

\begin{proof}
We need to show that the minimum of $V(x)+W(\alpha, x)$ occurs in $\mathcal R$. Or equivalently, we only need to show that the global minimum of $V(x)+W(\alpha, x)$ on $\overline{\mathcal R}$ is not taken on the hyperplane $\sum x_i=\frac 12$. In fact, on this hyperplane we have 
\[
\frac{\partial W}{\partial x_i} = -\frac{\alpha_i^2}{x_i^2} + \frac{(\sum \alpha_i)^2}{(1-\sum x_i)^2} = -\frac{\alpha_i^2}{x_i^2} + \frac{(\sum \alpha_i)^2}{(\sum x_i)^2}. 
\]
For each fixed $\alpha \in \bbR_+^m$ and for each $x$ on this hyperplane, we just choose an index $i_0$ so that
\[
\frac{\alpha_{i_0}}{x_{i_0}} = \max \{\frac{\alpha_{i}}{x_{i}} \ |\ 1 \le i \le m \},
\]
then we must have 
\[
\frac{\partial}{\partial x_{i_0}} (V(x)+W(\alpha, x)) < 0.
\]
This implies that for any $x$ on the hyperplane $\sum x_i=1/2$,  $V(x)+W(\alpha, x)$ takes a smaller value at a point $x+\varepsilon (0, \cdots, 1, \cdots, 0)^T \in \mathcal R$, where $1$ appears at the $i_0^{th}$ position. 
\end{proof}

Now we can state the main theorem. 
\begin{theorem}
\label{TH4.6}
Let $V$ be a potential function on $\mathcal P$ that satisfies conditions (\ref{scon}) and (\ref{neggrad}). Then the equivariant spectrum of $\hbar^2 \Delta+V$ determines $V$ in $\mathcal R$. 
\end{theorem}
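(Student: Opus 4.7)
The plan is to reduce the inverse spectral problem to the generalized Legendre inversion developed in Section 3, applied to $V$ and to the generating function $W(\alpha,x)$ of (\ref{Walphaxcpn}). By Theorem \ref{toricm} together with the continuation argument following (\ref{calpha}), the function
\[
G(\alpha)\;=\;\min_{x\in\mathcal{P}}\bigl(V(x)+W(\alpha,x)\bigr)
\]
is spectrally determined for every $\alpha\in\mathbb{R}_+^n$, so the task reduces to reconstructing $V|_{\mathcal{R}}$ from $G$.

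First I would show that $W$ generates a canonical transformation $\gamma$ between open subsets of $T^*\mathbb{R}_+^n$ and $T^*\mathcal{R}$. Lemma \ref{det} gives non-degeneracy of the mixed Hessian $[\partial^2 W/\partial x_i\partial\alpha_j]$ throughout $\mathcal{R}\times\mathbb{R}_+^n$, and since $W$ is a homogeneous quadratic polynomial in $\alpha$ and $\mathbb{R}_+^n$ is convex, Theorem \ref{TH3.5} (applied with $\alpha$ in the role of the convex variable) upgrades this to a global statement: for each fixed $x\in\mathcal{R}$ the map $\alpha\mapsto\partial_x W(\alpha,x)$ is a diffeomorphism of $\mathbb{R}_+^n$ onto an open subset of $T_x^*\mathcal{R}$. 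The ``if'' direction of Theorem 3.3 then yields the canonical-transformation property for $\gamma$.

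Next I would check the unique-minimum hypothesis of Theorem 3.6. Each summand of (\ref{Walphaxcpn}) has the form $c/l(x)$ with $c\ge 0$ and $l$ affine, so $W(\alpha,\cdot)$ is convex in $x$ and strictly convex whenever $\alpha\in\mathbb{R}_+^n$. Combined with the strict convexity of $V$ from (\ref{scon}) and the blow-up of $W$ at $\partial\mathcal{P}$ (versus the smoothness of $V$ there), the function $x\mapsto V(x)+W(\alpha,x)$ has a unique, non-degenerate global minimum $x(\alpha)$ in the interior of $\mathcal{P}$; Lemma \ref{uniqmin} together with hypothesis (\ref{neggrad}) then places this minimum in $\mathcal{R}$. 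Theorem 3.6 applies: $\gamma$ carries $\mathrm{graph}(dV|_{\mathcal{R}})$ onto the horizontal Lagrangian $\mathrm{graph}(dG)$ over the corresponding open subset of $\mathbb{R}_+^n$, and one has the identity $G(\alpha)=V(x(\alpha))+W(\alpha,x(\alpha))$.

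Finally, to recover $V(x)$ itself I would run this Legendre relation in reverse. Given $x\in\mathcal{R}$, search for $\alpha\in\mathbb{R}_+^n$ satisfying $\partial_\alpha G(\alpha)=\partial_\alpha W(\alpha,x)$; by the injectivity of $x\mapsto\partial_\alpha W(\alpha,x)$ for fixed $\alpha$ (Theorem 3.3 in its original direction, powered again by Lemma \ref{det}), such an $\alpha$ is the unique preimage of $x$ under the critical-point map $\sigma:\alpha\mapsto x(\alpha)$, and then $V(x)=G(\alpha)-W(\alpha,x)$ up to the additive constant in Theorem 3.1. The main obstacle I foresee is proving \emph{surjectivity} of $\sigma:\mathbb{R}_+^n\to\mathcal{R}$, which is what guarantees that the inversion reaches every point of $\mathcal{R}$. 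Openness of the image is automatic from the non-degeneracy in Lemma \ref{det}; what remains is a properness/boundary analysis using the explicit form (\ref{Walphaxcpn}) — the blow-up of $W$ as $\sum x_i\to 1$ keeps $x(\alpha)$ off that face, while the gradient condition (\ref{neggrad}) blocks $x(\alpha)$ from accumulating on $\{\sum x_i=1/2\}$ for any $\alpha\in\mathbb{R}_+^n$ — so that the image of $\sigma$ is also closed in $\mathcal{R}$, hence all of $\mathcal{R}$.
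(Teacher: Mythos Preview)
Your proposal is correct and follows essentially the same generalized-Legendre-transform route as the paper: spectrally determine $G$ via Theorem~\ref{toricm}, use the canonical relation generated by $W$ (justified by Lemma~\ref{det}, Theorem~\ref{TH3.5}, and Lemma~\ref{uniqmin}) to pull $\mathrm{graph}(dG)$ back to $\mathrm{graph}(dV)$ over $\mathcal{R}$, and read off $V$ up to an additive constant. The surjectivity of $\sigma:\alpha\mapsto x(\alpha)$ onto $\mathcal{R}$ that you single out is indeed glossed over in the paper's short proof (which simply asserts $\Lambda_1=\{(x,\partial V/\partial x):x\in\mathcal{R}\}$); your open-and-closed argument is a reasonable way to fill that gap.
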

\begin{proof}
According to theorem \ref{toricm}, the quantity 
\[
G(\alpha) = \min_x \Big( V(x)+W(\alpha, x) \Big)
\]
is spectrally determined for all $\alpha \in \bbR_+^n$. In particular, the Lagrangian submanifold 
\[
\Lambda_2 = \{(\alpha, \frac{\partial G}{\partial \alpha})\ |\ \alpha \in \bbR_+^n\}
\] 
is spectrally determined. Let 
\[
\gamma: T^*\mathcal R \to T^*\bbR_+^n
\] 
be the canonical transform associated to the graph of $W$, then 
\[
\Lambda_1 = \gamma^{-1}(\Lambda_2)
\] 
is spectrally determined. But we also have 
\[
\Lambda_1=\{(x, \frac{\partial V}{\partial x})\ |\ x \in \mathcal R\}.
\] 
It follows that the restriction of $V$ to $\mathcal R$ is spectrally determined, up to a constant.  
\end{proof}

\subsection{Inverse spectral results on Hirzebruch surfaces}

By using similar techniques to those in the previous section, one can prove an analog of theorem \ref{TH4.6} for Hirzebruch surfaces. Let's briefly describe it in this section. Recall that a Hirzebruch surface $\mathcal H_n$ is a four dimensional symplectic toric manifold whose moment polytope is given by 
\begin{equation}\label{HirzP}
\mathcal P: x_1 > 0, x_2 > 0, 1-x_2 > 0, n+1-x_1 -nx_2 >0,
\end{equation}
where $n$ is a non-negative integer. It follows that
\begin{equation}\label{HirzW}
W(\alpha, x)=\frac 12\left[ \frac{\alpha_1^2}{x_1} + \frac{\alpha_2^2}{x_2}+\frac{\alpha_2^2}{1-x_2}+\frac{(\alpha_1+n\alpha_2)^2}{n+1-x_1-nx_2} \right].
\end{equation}
We start by studying the critical points of $W$. 
\begin{lemma}
For $ \alpha $ fixed, the critical points of $W$ lie on the curve 
\[
x_1= \frac{n+1}{2}-\frac n2 \left( x_2+ \frac{\sqrt n x_2(1-x_2)}{\sqrt{1-2x_2}}\right). 
\]
\end{lemma}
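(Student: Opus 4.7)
The plan is to compute the two partial derivatives of $W$ at fixed $\alpha$ from formula (\ref{HirzW}), set both to zero, and then eliminate $\alpha_1$ and $\alpha_2$ algebraically to obtain a single relation in $(x_1,x_2)$.

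First I would handle $\partial W/\partial x_1=0$. Differentiating (\ref{HirzW}) gives
\[
\frac{\alpha_1^2}{x_1^2} \;=\; \frac{(\alpha_1+n\alpha_2)^2}{(n+1-x_1-nx_2)^2},
\]
and taking positive square roots (valid because $\alpha_i>0$ and $x\in\mathcal P$ make both sides strictly positive) yields the relation
\[
\alpha_1(n+1-x_1-nx_2) \;=\; x_1(\alpha_1+n\alpha_2), \qquad (\star)
\]
which in particular shows $(\alpha_1+n\alpha_2)^2/(n+1-x_1-nx_2)^2 = \alpha_1^2/x_1^2$. Next, the condition $\partial W/\partial x_2=0$ reads
\[
\frac{\alpha_2^2}{x_2^2} \;=\; \frac{\alpha_2^2}{(1-x_2)^2} + \frac{n(\alpha_1+n\alpha_2)^2}{(n+1-x_1-nx_2)^2}.
\]
Using $(\star)$ to replace the last term by $n\alpha_1^2/x_1^2$ and simplifying the left side via the identity $\tfrac{1}{x_2^2}-\tfrac{1}{(1-x_2)^2}=\tfrac{1-2x_2}{x_2^2(1-x_2)^2}$ gives
\[
\alpha_2 \;=\; \frac{\sqrt{n}\,\alpha_1\,x_2(1-x_2)}{x_1\sqrt{1-2x_2}}. \qquad (\star\star)
\]

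Substituting $(\star\star)$ into $(\star)$ cancels the overall factor of $\alpha_1$ and, after collecting the $x_1$ terms on one side, gives
\[
2x_1 \;=\; n+1 - nx_2 - \frac{n\sqrt{n}\,x_2(1-x_2)}{\sqrt{1-2x_2}},
\]
which is precisely the curve in the statement. The only subtlety is the sign choice when passing from the squared equations to the linear forms $(\star)$ and $(\star\star)$; this is harmless because $\alpha_1,\alpha_2,x_1,x_2(1-x_2),n+1-x_1-nx_2$ are all strictly positive on $\mathcal P$. Extracting $\sqrt{1-2x_2}$ in $(\star\star)$ moreover forces the critical points to lie in $\{x_2<1/2\}$, which is consistent with the parametrisation. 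No step poses a genuine conceptual obstacle; the lemma is essentially a direct algebraic manipulation of the two critical-point equations.
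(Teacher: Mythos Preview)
Your proof is correct and follows essentially the same approach as the paper: both compute the two critical-point equations, take positive square roots, and eliminate $\alpha$ to obtain the stated curve. The only cosmetic difference is that the paper packages the $x_2$-equation via the auxiliary variable $x_2^*=\sqrt{n}\,x_2(1-x_2)/\sqrt{1-2x_2}$ and then uses the ratio identity $\alpha_1/x_1=\alpha_2/x_2^*=(\alpha_1+n\alpha_2)/(x_1+nx_2^*)$ to conclude $x_1+nx_2^*=n+1-x_1-nx_2$, whereas you solve explicitly for $\alpha_2$ and substitute; the content is the same.
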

\begin{proof}
Obviously $W$ has no critical  point in the region $x_2 \ge \frac 12$. For $x_2<\frac 12$ let
\begin{equation}
\label{x2star}
x_2^* = \frac{\sqrt n x_2(1-x_2)}{\sqrt{1-2x_2}},
\end{equation}
Then the critical point equation  becomes
\[\aligned
0& =2\frac{\partial W}{\partial x_1} = -\frac{\alpha_1^2}{x_1^2}+\frac{(\alpha_1+n\alpha_2)^2}{(n+1-x_1-nx_2)^2}, \\
0& =2\frac{\partial W}{\partial x_2} = -\frac{n\alpha_2^2}{(x^*_2)^2}+ \frac{n(\alpha_1+n\alpha_2)^2}{(n+1-x_1-nx_2)^2}.
\endaligned\]
So for a point to be a critical point of $W$ for some $\alpha$, it must satisfy
\[
\frac{\alpha_1^2}{x_1^2} = \frac{\alpha_2^2}{(x^*_2)^2} = \frac{(\alpha_1+n\alpha_2)^2}{(x_1+nx_2^*)^2},
\] 
and as a consequence, 
\[
x_1+nx_2^* = n+1-x_1-nx_2.
\]
\end{proof}

Now let $ \mathcal{R} $ be the region defined by the equations $ x_1 >0, $ $ 0 < x_2 < \frac 12, $ $ n+1 -x_1 -nx_2 >0 $ and $ 2x_1 > n+1 -n (x_1 + x^{\ast}_2), $ i.e. the region depicted in the figure below
 
\begin{center}
\begin{tikzpicture}[scale=.6] 
\draw[thick, ->](0, 0)--(9, 0);
\draw[thick, ->](0, 0)--(0, 5);
\draw[thick, smooth](0, 4)--(4,4);
\draw[thick, smooth](4, 4)--(8,0);
\draw[thick, dash pattern=on 4pt off 3pt](0, 2)--(6,2); 
\draw[thick, dash pattern=on 4pt off 3pt]plot coordinates {(0,1.8)(1,1.2)(2,0.8)(3,0.4)(4,0)}; 
\node at (5, 1) {$\mathcal R$};
\node at (4.5, 4.5) {$(1,1)$};
\node at (8, -.5) {$(n+1, 0)$};
\node at (9, .3) {$x_1$};
\node at (0, 5.2) {$x_2$};
\end{tikzpicture} 
\end{center}

As in the case of $\bbC\bbP^m$, one can prove
\begin{lemma}
Let $W(\alpha, x)$ be given by (\ref{HirzW}). Then
\begin{enumerate}
\item For  $x \in \mathcal R$ and  $\alpha \in \bbR_+^2$, the matrix $\left[ \frac{\partial^2W}{\partial x_i\partial \alpha_j}\right]$ is non-degenerate.
\item For  fixed $x \in \mathcal R$, the map $\frac{\partial W}{\partial x}$ is a diffeomorphism from $\bbR^2_+$ onto its image. 
\end{enumerate}
\end{lemma}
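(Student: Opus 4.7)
My plan is to dispatch the two assertions in sequence: the non-degeneracy in (1) by an explicit computation that exploits the defining inequality of $\mathcal R$, and the diffeomorphism in (2) by a direct appeal to Theorem \ref{TH3.5}.

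For part (1), I would write down the $2\times 2$ mixed Hessian explicitly. Setting $a = \alpha_1/x_1^2$, $b = \alpha_2/x_2^2$, $c = \alpha_2/(1-x_2)^2$, and $d = (\alpha_1+n\alpha_2)/L_4^2$ with $L_4 = n+1-x_1-nx_2$, direct differentiation of \eqref{HirzW} yields
\[
\left[\frac{\partial^2 W}{\partial \alpha_i \partial x_j}\right] = \begin{pmatrix} -a+d & nd \\ nd & -b+c+n^2 d \end{pmatrix},
\]
whose determinant expands to $(a-d)(b-c) - n^2 ad$. Using \eqref{x2star} in the form $n\, x_2^2(1-x_2)^2 = x_2^{*2}(1-2x_2)$, which is valid on $\mathcal R$ since $x_2 < 1/2$, one can rewrite this determinant, after clearing the common positive denominator $x_1^2 L_4^2 x_2^2(1-x_2)^2$, as a positive multiple of
\[
(1-2x_2)\bigl[\alpha_1\alpha_2 L_4^2 - (\alpha_1+n\alpha_2)(\alpha_2 x_1^2 + n\alpha_1 x_2^{*2})\bigr].
\]
The defining inequality of $\mathcal R$, namely $2x_1 + n(x_2+x_2^*) > n+1$, is precisely the assertion $L_4 < x_1 + nx_2^*$. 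Applying the Cauchy--Schwarz inequality in the form
\[
(\alpha_1+n\alpha_2)(\alpha_2 x_1^2 + n\alpha_1 x_2^{*2}) \ge \alpha_1\alpha_2(x_1 + nx_2^*)^2,
\]
we obtain $\alpha_1 \alpha_2 L_4^2 < (\alpha_1+n\alpha_2)(\alpha_2 x_1^2 + n\alpha_1 x_2^{*2})$ throughout $\mathcal R \times \mathbb{R}_+^2$, so the bracketed expression is strictly negative and the determinant is nonzero (indeed, strictly negative) there.

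For part (2), I would invoke Theorem \ref{TH3.5} with the roles of its variables $x$ and $y$ played by our $\alpha$ and $x$ respectively, and with $\mathcal U_1 = \mathbb R_+^2$ (which is convex) and $\mathcal U_2 = \mathcal R$. The function $W(\alpha, x)$ is manifestly a homogeneous quadratic polynomial in $\alpha$ by \eqref{HirzW}, and the required non-degeneracy of the mixed Hessian on $\mathbb R_+^2 \times \mathcal R$ is exactly the content of part (1). Theorem \ref{TH3.5} therefore delivers the claimed diffeomorphism.

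The principal obstacle is the algebraic rearrangement in part (1) that uncovers $L_4 < x_1 + nx_2^*$ as the geometric heart of non-degeneracy; this identification is what motivates the particular shape of $\mathcal R$ and positions Cauchy--Schwarz to close the argument. Once that simplification is in hand, both assertions follow quickly, and part (2) reduces to a formal invocation of the earlier general theorem.
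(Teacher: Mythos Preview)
Your proposal is correct and follows essentially the same route as the paper. For part (1) the paper substitutes $-b+c=-n\alpha_2/(x_2^*)^2$ directly into the matrix before computing the determinant, then bounds $L_4$ by $x_1+nx_2^*$ and applies Cauchy--Schwarz in the equivalent form $\frac{(x_1+nx_2^*)^2}{\alpha_1+n\alpha_2}\le \frac{x_1^2}{\alpha_1}+\frac{(nx_2^*)^2}{n\alpha_2}$; your version simply postpones the $x_2^*$ identity until after the determinant is expanded, arriving at the same inequality. For part (2) the paper remarks that the argument is ``similar to the proof of Theorem \ref{TH3.5}'', whereas you invoke that theorem directly with $(\mathcal U_1,\mathcal U_2)=(\bbR_+^2,\mathcal R)$---this is legitimate since all its hypotheses (convexity of $\bbR_+^2$, homogeneity of $W$ in $\alpha$, and the non-degeneracy from part (1)) are in place.
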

\begin{proof}
Note that in region $\mathcal R$ we have 
\[
x_1+nx_2^* > n+1-x_1-nx_2.
\]
So 
\[\aligned
\det\left[\frac{\partial^2W}{\partial x_i\partial \alpha_j}\right] & = \det \begin{pmatrix}
\frac{-\alpha_1}{x_1^2}+\frac{\alpha_1+n\alpha_2}{(n+1-x_1-nx_2)^2} & \frac{n(\alpha_1+n\alpha_2)}{(n+1-x_1-nx_2)^2} \\
\frac{n(\alpha_1+n\alpha_2)}{(n+1-x_1-nx_2)^2}  & \frac{-n\alpha_2}{(x_2^*)^2}+\frac{n^2(\alpha_1+n\alpha_2)}{(n+1-x_1-nx_2)^2} 
\end{pmatrix}
\\
&=  \frac{n\alpha_1\alpha_2}{x_1^2(x_2^*)^2} - \left(\frac{\alpha_1}{x_1^2}+\frac{\alpha_2}{n(x_2^*)^2} \right)\frac{n^2(\alpha_1+n\alpha_2)}{(n+1-x_1-nx_2)^2} \\
& <   \frac{n\alpha_1\alpha_2}{x_1^2(x_2^*)^2} - \left(\frac{\alpha_1}{x_1^2}+\frac{\alpha_2}{n(x_2^*)^2} \right)\frac{n^2(\alpha_1+n\alpha_2)}{(x_1+nx_2^*)^2} \\
& = \frac{n^2\alpha_1(n\alpha_2)(\alpha_1+n\alpha_2)}{x_1^2(nx_2^*)^2(x_1+nx_2^*)^2}\left[\frac{(x_1+nx_2^*)^2}{\alpha_1+n\alpha_2}-\frac{(nx_2^*)^2}{n\alpha_2}-\frac{x_1^2}{\alpha_1}  \right]
\\
&<0, 
\endaligned\]
where again the last inequality follows from the Cauchy-Schwartz inequality. 

The proof of (2) is similar to the proof of theorem \ref{TH3.5}. 
\end{proof}

Finally let $V$ be a  potential so that 
\begin{equation}\label{sconH}
V \mbox{\ is strictly convex in \ }\mathcal P.
\end{equation} 
and
\begin{equation}\label{neggradH}
\aligned
& \mbox{ On the line\ }x_2=\frac 12, \quad \frac{\partial V}{\partial x_2}>0; \\
& \mbox{ On the curve\ }x_1 =\frac{n+1}{2} - \frac n2(x_2+x_2^*), \quad \frac{\partial V}{\partial x_i}<0 \mbox{\ for all\ }i.
\endaligned\end{equation} 
\begin{lemma}
Suppose $V$ is a function satisfying (\ref{sconH}) and (\ref{neggradH}), then for any $\alpha \in \bbR_+^2$, the function $V(x)+W(\alpha, x)$ admits a unique minimum on $\mathcal R$. 
\end{lemma}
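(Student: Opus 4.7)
The plan follows Lemma \ref{uniqmin} for $\bbC\bbP^n$: first produce the unique global minimum of $V+W$ on $\mathcal P$ by strict convexity and coercivity, then rule out its sitting on the portion of $\partial\mathcal R$ inside $\mathcal P$, which consists of the segment $\{x_2=1/2\}\cap\mathcal P$ and the curve $C:\ x_1=\phi(x_2)$, $\phi(x_2):=\tfrac{n+1}{2}-\tfrac n2(x_2+x_2^*)$.

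For the setup, each summand of $W(\alpha,\cdot)$ has the form $c/\ell(x)$ with $\ell$ a positive linear form, giving a convex rank-one Hessian; since the four covectors $(1,0),(0,1),(0,-1),(-1,-n)$ span $\bbR^2$, $W$ is strictly convex in $x$. Combined with (\ref{sconH}), $V+W$ is strictly convex on $\mathcal P$; since $W\to+\infty$ on $\partial\mathcal P$ while $V$ is smooth on $\overline{\mathcal P}$, $V+W$ is coercive. Hence its minimum on $\overline{\mathcal R}\cap\mathcal P$ is attained at some $x^*$, and showing $x^*\notin\partial\mathcal R$ will identify it with the unique global minimum on $\mathcal P$ via strict convexity.

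On $\{x_2=1/2\}$, the $\alpha_2$-terms of $\partial W/\partial x_2$ cancel, leaving $n(\alpha_1+n\alpha_2)^2/(2(n+1-x_1-n/2)^2)>0$; with $\partial V/\partial x_2>0$ from (\ref{neggradH}), the $-x_2$ direction lies in $\mathcal R$ and strictly decreases $V+W$. On $C$ the defining identity $n+1-x_1-nx_2=x_1+nx_2^*$ together with the easy identity $-1/x_2^2+1/(1-x_2)^2=-n/(x_2^*)^2$ reduces the two partials to
\[
\partial W/\partial x_1|_C=\tfrac12\!\left[\tfrac{(\alpha_1+n\alpha_2)^2}{(x_1+nx_2^*)^2}-\tfrac{\alpha_1^2}{x_1^2}\right],\quad \partial W/\partial x_2|_C=\tfrac n2\!\left[\tfrac{(\alpha_1+n\alpha_2)^2}{(x_1+nx_2^*)^2}-\tfrac{\alpha_2^2}{(x_2^*)^2}\right].
\]
The inequality $(\alpha_1+n\alpha_2)/(x_1+nx_2^*)\le\max(\alpha_1/x_1,\alpha_2/x_2^*)$, obtained by writing the numerator as $(\alpha_1/x_1)x_1+(\alpha_2/x_2^*)(nx_2^*)$, forces at least one of these partials to be $\le 0$ on $C$. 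Paired with the corresponding $\partial V/\partial x_i<0$ from (\ref{neggradH}) and the monotonicity of $x_2+x_2^*$ on $(0,1/2)$ (so $\phi$ is decreasing and both $+x_1$ and $+x_2$ point into $\mathcal R$ from $C$), this yields a feasible direction of strict decrease, contradicting the minimality of $x^*$.

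The main obstacle is the dichotomy step on $C$: it is the Hirzebruch analog of the max-index trick used in Lemma \ref{uniqmin}, and it is precisely what forces (\ref{neggradH}) to impose $\partial V/\partial x_i<0$ for \emph{both} $i$ on $C$ rather than for just one.
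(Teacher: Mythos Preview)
Your proof is correct and follows essentially the same approach as the paper's: establish the unique global minimum on $\mathcal P$ via strict convexity and coercivity, then rule out the two interior boundary pieces of $\mathcal R$ by producing a direction of strict decrease into $\mathcal R$—using $\partial W/\partial x_2>0$ on $\{x_2=1/2\}$ and the max-index trick on the curve $C$, exactly as in Lemma~\ref{uniqmin}. Your version is in fact slightly more careful, writing $\partial W/\partial x_i\le 0$ (rather than $<0$) on $C$ and then using the strict sign of $\partial V/\partial x_i$ from (\ref{neggradH}) to get the strict decrease; you also make explicit that both $+x_1$ and $+x_2$ point into $\mathcal R$ from $C$ (via monotonicity of $x_2+x_2^*$), a point the paper leaves implicit.
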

\begin{proof}
We only need to analyze the function $V(x)+W(\alpha, x)$ along the line $x_2=\frac 12$ and along the curve $x_1 =\frac{n+1}{2} - \frac n2(x_2+x_2^*)$. 

In the first case, we notice that along $x_2=\frac 12$, 
\[
\frac{\partial W}{\partial x_2} = \frac{n(\alpha_1+n\alpha_2)^2}{(n+1-x_1-nx_2)^2}>0,
\]
so the condition $\frac{\partial V}{\partial x_2}>0$ implies that the function $V(x)+W(\alpha, x)$ takes a smaller value in $\mathcal R$. 

In the second case, one can argue exactly as in the proof of lemma \ref{uniqmin} to conclude that either $\frac{\partial W}{\partial x_1}<0$ or $\frac{\partial W}{\partial x_2}<0$. 
\end{proof}

Given these lemmas, the following theorem is now obvious:
\begin{theorem}
If $V$ is a $\mathbb T^2$-invariant potential on the Hirzebruch surface $\mathcal H_n$ whose restriction on $\mathcal R$ satisfies (\ref{sconH}) and (\ref{neggradH}), then the equivariant spectrum of the operator $\hbar^2 \Delta +V$ determines $V$ in the region $\mathcal R$. 
\end{theorem}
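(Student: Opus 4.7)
The plan is to mimic, almost verbatim, the proof strategy used for $\bbC\bbP^n$ in Theorem \ref{TH4.6}, with the lemmas just established for the Hirzebruch surface playing the roles of Lemma \ref{det} and Lemma \ref{uniqmin}.

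First I would appeal to Theorem \ref{toricm}: the equivariant spectrum of $\hbar^2\Delta + V$ determines the function
\[
G(\alpha) = \min_{x \in \mathcal P}\bigl( V(x) + W(\alpha,x)\bigr)
\]
for all rational $\alpha$ in the positive cone, and hence by continuity for all $\alpha \in \bbR_+^2$, with $W$ given by (\ref{HirzW}). By the preceding lemma, the hypotheses (\ref{sconH}) and (\ref{neggradH}) guarantee that for each such $\alpha$ this minimum is attained at a unique point, which moreover lies in $\mathcal R$ and is non-degenerate (since $V+W$ is strictly convex on $\mathcal P$). Thus $G$ is smooth on $\bbR_+^2$ and the spectrally determined Lagrangian
\[
\Lambda_2 = \left\{\left(\alpha, \tfrac{\partial G}{\partial \alpha}(\alpha)\right)\ \Big|\ \alpha \in \bbR_+^2\right\}
\]
is well-defined and horizontal in $T^*\bbR_+^2$.

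Next I would verify that $W$ generates a canonical transformation $\gamma: T^*\mathcal R \to T^*\bbR_+^2$. Part (1) of the preceding lemma says that the mixed Hessian $[\partial^2 W/\partial x_i\partial\alpha_j]$ is non-singular on $\mathcal R\times\bbR_+^2$, which is the necessary condition of Proposition 3.4. Part (2), combined with an argument in the spirit of Theorem \ref{TH3.5} (exploiting the fact that $W$ is a rational function whose $\alpha$-Hessian is quadratic), promotes this local statement to global injectivity, so $\Gamma_W$ is the graph of a canonical transformation in the sense of Section 3. With horizontality of $\Lambda_2$ confirmed by Theorem 3.6 (whose hypotheses are precisely the unique-nondegenerate-minimum statement just proved), I can then form
\[
\Lambda_1 = \gamma^{-1}(\Lambda_2) \subset T^*\mathcal R,
\]
which is spectrally determined because both $\gamma$ (coming from the known function $W$) and $\Lambda_2$ are.

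Finally, by Theorem 3.1 applied with $F=V$ on $\mathcal R$, the graph $\Lambda_1$ is exactly $\{(x,\partial V/\partial x)\ |\ x \in \mathcal R\}$, and hence $dV$ is recovered on $\mathcal R$, determining $V|_{\mathcal R}$ up to a constant (which is itself fixed by $\min_\alpha G(\alpha)$ or by any single value of $V$).

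The only step that really requires care — and which I expect to be the main obstacle — is the global injectivity of $\frac{\partial W}{\partial x}(\cdot, \alpha): \bbR_+^2 \to T_x^*\mathcal R$ in part (2) of the lemma. Unlike the $\bbC\bbP^n$ case where $\mathcal R$ is cut out by a single linear inequality, in the Hirzebruch setting the cutoff curve $x_1 = \frac{n+1}{2}-\frac n2(x_2+x_2^*)$ is genuinely nonlinear, so the mean-value trick of Theorem \ref{TH3.5} does not literally apply and one has to check by hand that the quadratic-in-$\alpha$ map $\alpha\mapsto\partial_x W(\alpha,x)$ is injective on $\bbR_+^2$ for each $x\in\mathcal R$. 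The inequality $x_1 + nx_2^* > n+1 - x_1 - nx_2$ that defines $\mathcal R$ is precisely what makes the relevant determinant strictly negative (as in the proof of non-degeneracy above), and I would leverage this sign to rule out coincidences of the form $\partial_x W(\alpha,x) = \partial_x W(\alpha',x)$ via the same Cauchy–Schwarz manipulation used in Lemma \ref{det}.
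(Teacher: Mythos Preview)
Your approach is correct and is exactly what the paper does: it simply declares the theorem ``now obvious'' given the preceding lemmas and refers back implicitly to the proof of Theorem~\ref{TH4.6}. Your write-up is in fact more detailed than the paper's.

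One point of confusion in your last paragraph is worth correcting. You worry that the mean-value trick of Theorem~\ref{TH3.5} fails for part (2) of Lemma~4.8 because the boundary of $\mathcal R$ is nonlinear. But recall what part~(2) asserts: for \emph{fixed} $x\in\mathcal R$, the map $\alpha\mapsto \frac{\partial W}{\partial x}(x,\alpha)$ is a diffeomorphism of $\bbR_+^2$ onto its image. Here the domain is $\bbR_+^2$, which is convex, and $W(x,\alpha)$ is homogeneous quadratic in $\alpha$, so $\frac{\partial W}{\partial x}$ is quadratic in $\alpha$. Thus Theorem~\ref{TH3.5} applies verbatim with the roles of $x$ and $\alpha$ interchanged; the geometry of $\mathcal R$ never enters. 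The paper's one-line ``similar to the proof of Theorem~\ref{TH3.5}'' is therefore literally correct, and no Cauchy--Schwarz manipulation is needed for this step.
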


\begin{remark}
With slightly more work, one can prove a similar theorem on toric varieties whose moment polytope $\mathcal P$ is bounded by  coordinate hyperplanes 
\[
x_i>0 \quad (1 \le i \le i_0)  
\]
and pairs of parallel coordinate hyperplanes
\[
x_i>0, \quad c_i-x_i>0 \quad  (i_0+1 \le i \le n)
\]
 together with one exceptional ``skew hyperplane''
\[
b_0-(b_1x_1 + \cdots + b_n x_n) >0, 
\] 
where the $b_i$'s are nonnegative integers. 
\end{remark}

\section{Local inverse and spectral rigidity results}

We'll begin this section with a quick review of the material in \S 1: Let $M$ be a $2n$-dimensional Riemannian manifold, $G$ an $n$ dimensional torus, $\tau: G \times M \to M$ an isometric action of $G$ on $M$, $V: M \to \mathbb R$ a $G$-invariant $C^\infty$ function and $\hbar^2 \Delta_M+V$ the Schr\"odinger operator (\ref{P}). As in section 1 we will denote by $M_0$ the open subset of $M$ on which $G$ acts freely and by $X$ the quotient $M_0/G$ From the symbol of the operator (\ref{P}) one gets a reduced symbol
\begin{equation}\label{reducedsym}
p_\alpha(x, \xi)=|\xi|_x^2+V_\alpha(x),
\end{equation}
on $T^*X$, where $V_\alpha(x)=W(x, \alpha)+V(x)$ is the function (\ref{V}). 

As we pointed out in \S 1, all of the results of this paper are basically corollaries of the formula (\ref{mualphah}) which asserts that the push-forward $(p_\alpha)_*\nu$ of the symplectic volume form on $T^*X$ is a spectral invariant of the operator (\ref{P}), and, in particular, that the points on the real line where this measure fails to be smooth are spectral invariants. However, the only use of this we've made so far is to conclude that if $(p_\alpha)_*\nu$ is supported on the interval $[c_\alpha, \infty)$ then $c_\alpha$ is a spectral invariant. Hence its not unreasonable to hope that the other points on the interval $[c_\alpha, \infty)$ where $(p_\alpha)_*\nu$ is singular might have inverse spectral applications, and our goal in this section will be to explore this possibility. 

The singularities of $(p_\alpha)_*\nu$ are critical values of the symbol $p_\alpha$, and by (\ref{reducedsym}) these critical values have to occur at points where $\xi=0$, and hence coincide with critical values of $V_\alpha(x)$. However, its not clear that all critical values of $V_\alpha(x)$ are spectral invariants since one critical value can correspond to several critical points and the contribution to $(p_\alpha)_*\nu$ coming from these points can cancel out. Let's suppose however that for $\alpha=\alpha_0$ there is  a unique critical point, $p_0$, with critical value $c_0=V_{\alpha_0}(p_0)$ and that this point is a non-degenerate local minimum. We will prove 

\begin{theorem}
If the function $W(x, \alpha)$ satisfies the non-degeneracy condition (\ref{3.11}) at $(x_0, \xi_0)$, then $V$ is e-spectrally determined on a neighborhood of $x_0$. 
\end{theorem}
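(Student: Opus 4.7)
The plan is to extend the spectral invariant $c_0$ at the single point $\alpha_0$ to a spectrally-determined function $G(\alpha)$ defined on a whole neighborhood of $\alpha_0$, and then invert the generalized Legendre transform of \S 3 to recover the graph of $dV$ on a neighborhood of $x_0$.

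The first step is to apply the implicit function theorem to the critical-point equation $\frac{\partial V}{\partial x}(x) + \frac{\partial W}{\partial x}(x, \alpha) = 0$. Since $p_0 = x_0$ is a non-degenerate local minimum of $V_{\alpha_0}$, the Hessian of $V_{\alpha_0}$ at $p_0$ is invertible, so on a neighborhood $\mathcal U_2$ of $\alpha_0$ there is a unique smooth family $\alpha \mapsto p(\alpha)$ of critical points with $p(\alpha_0) = x_0$, each of which remains a non-degenerate local minimum. Set $G(\alpha) := V_\alpha(p(\alpha))$.

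The second step is to argue that $G(\alpha)$ is an equivariant spectral invariant for $\alpha \in \mathcal U_2$. A non-degenerate local minimum of $V_\alpha$ contributes a characteristic singularity (of the standard Morse type in appropriate coordinates) to the density of the push-forward measure $(p_\alpha)_*\nu_\alpha$ at the critical value $G(\alpha)$. At $\alpha_0$ this singularity is isolated because $p_0$ is the \emph{only} critical point of $V_{\alpha_0}$; hence for $\alpha$ sufficiently close to $\alpha_0$ the only critical points of $V_\alpha$ near $G(\alpha)$ on the value axis are those coming from $p(\alpha)$, and any distant critical points contribute singularities that are bounded away from $G(\alpha)$. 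Consequently $G(\alpha)$ can be read off from $(p_\alpha)_*\nu_\alpha$ and is spectrally determined. This is the step I expect to require the most care, since one must verify that the Morse contribution near $p(\alpha)$ is not accidentally cancelled by other contributions opening up as $\alpha$ varies.

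The third step is to apply the results of \S 3. The hypothesis that the matrix $\bigl[\partial^2 W/\partial x_i\partial\alpha_j\bigr]$ is non-degenerate at $(x_0, \alpha_0)$, together with Theorem 3.3, implies that after shrinking neighborhoods $\mathcal U_1 \ni x_0$ and $\mathcal U_2 \ni \alpha_0$ the Lagrangian $\Gamma$ generated by $W$ is the graph of a canonical transformation $\gamma$ between open subsets of $T^*\mathcal U_1$ and $T^*\mathcal U_2$. The critical-point equation from Step 1 is precisely the statement that $\gamma$ maps the horizontal Lagrangian $\Lambda_1 = \{(x, dV(x)) : x \in \mathcal U_1\}$ to the horizontal Lagrangian $\Lambda_2 = \{(\alpha, dG(\alpha)) : \alpha \in \mathcal U_2\}$, which is the content of Theorem 3.6 applied in this local non-degenerate setting. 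Since $G$ is spectrally determined, so is $\Lambda_2$, and hence so is $\Lambda_1 = \gamma^{-1}(\Lambda_2)$. This determines $dV$ on $\mathcal U_1$, and therefore $V$ on $\mathcal U_1$ up to an additive constant, which is in turn pinned down by $G(\alpha_0) = V(x_0) + W(x_0, \alpha_0)$.
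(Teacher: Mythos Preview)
Your proposal is correct and follows essentially the same route as the paper's proof: use the implicit function theorem to extend the non-degenerate minimum to a family $x_\alpha$, argue that the resulting critical value $G(\alpha)=V_\alpha(x_\alpha)$ is e-spectrally determined, and then invoke the generalized Legendre machinery of \S 3 (the non-degeneracy of (\ref{3.11}) giving the local canonical transformation, and the horizontality result to identify $\Lambda_2=\gamma(\Lambda_1)$ with the graph of $dG$). If anything, your treatment of the second step---explaining why the Morse singularity in $(p_\alpha)_*\nu_\alpha$ at $G(\alpha)$ survives perturbation and is not washed out by other critical points---is more explicit than the paper's, which essentially asserts this without further comment.
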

\begin{proof}
Let $\xi_0=-\frac{\partial W}{\partial x}(x_0, \xi_0)$ and $\eta_0=\frac{\partial W}{\partial \alpha}(x_0, \alpha_0)$. Then if $W$ satisfies the non-degeneracy condition (\ref{3.11}) at $(x_0, \alpha_0)$, $W$ is the generating function of a canonical transformation, $\gamma$, mapping a neighborhood of $(x_0, \xi_0)$ in $T^*X$ onto a neighborhood of $(\alpha_0, \eta_0)$ in $T^*\mathbb R^n$. Moreover if $V(x, \alpha)$ has a non-degenerate minimum at $(x_0, \alpha_0)$, then there exists a neighborhood $U_0$ of $x_0$ in $X$ and a neighborhood $Q$ of $\alpha_0$ in $\mathbb R^n$ such that for all $\alpha \in Q$,  $V_\alpha(x)$ has a unique critical point of $V_{\alpha_0}(x)$ on $X$ at which $V_{\alpha_0}$ takes the critical value $c_0$. The same is true of the critical point $x_\alpha$ of $V_\alpha(x)$. Hence if we let $G(\alpha)=V_\alpha(x_\alpha)$ the function $G: Q \to \mathbb R$ is e-spectrally determined and hence, by theorem 3.5, the function $V|_{\mathcal U_0}$ is e-spectrally determined.
\end{proof}
 
If there are several critical points at which $V_{\alpha_0}(x)$ takes the critical value $c_0$, we can no longer conclude that $V$ is e-spectrally determined on a neighborhood of $x_0$ but we can prover the following local ``spectrally rigidity" result: 
\begin{theorem}
Given a $G$-invariant $C^\infty$ function, $V_1: M \to \mathbb R$, there exists a neighborhood $\mathcal U_0$ of $x_0$ and an $\varepsilon_0>0$ such that for all $0 < \varepsilon < \varepsilon_0$, the equivariant spectrum of the operator $\hbar^2 \Delta +V +\varepsilon V_1$ determines $V_1| {\mathcal U_0}$. 
\end{theorem}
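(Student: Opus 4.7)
The plan is to reduce to the generalized Legendre inversion of Theorem 5.1 after first extracting from the equivariant spectrum the branch of critical values of the reduced symbol that is attached to $x_0$. The key spectral input is the observation already used in the proof of Theorem 5.1: at any isolated non-degenerate critical point of $V_\alpha^\varepsilon(x) := V(x)+\varepsilon V_1(x)+W(x,\alpha)$, the corresponding critical value of $p_\alpha^\varepsilon(x,\xi)=|\xi|_x^2+V_\alpha^\varepsilon(x)$ appears as a singular point of the push-forward measure $(p_\alpha^\varepsilon)_*\nu$ and is therefore an equivariant spectral invariant of $\hbar^2\Delta_M+V+\varepsilon V_1$.

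Let $x_0,x_1,\dots,x_k$ denote the non-degenerate critical points of $V_{\alpha_0}$ on which the common value $c_0$ is attained. The implicit function theorem applied to $\nabla_x V_\alpha^\varepsilon=0$ yields, for $(\alpha,\varepsilon)$ in a neighborhood $Q\times(-\varepsilon_1,\varepsilon_1)$ of $(\alpha_0,0)$, smoothly varying critical points $x_i^\varepsilon(\alpha)$ and critical values $c_i^\varepsilon(\alpha):=V_\alpha^\varepsilon(x_i^\varepsilon(\alpha))$. The envelope theorem gives
\[
\partial_\alpha c_i^\varepsilon(\alpha)=\partial_\alpha W(x_i^\varepsilon(\alpha),\alpha),
\]
so at $(\alpha_0,0)$ the branches $c_i^0$ have first-order parts $\partial_\alpha W(x_i,\alpha_0)$, which are pairwise distinct since the $x_i$ are distinct. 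After shrinking $Q$, the $k+1$ unperturbed curves $c_i^0(\alpha)$ are therefore pairwise distinct on $Q\setminus\{\alpha_0\}$. For $\varepsilon_0\le\varepsilon_1$ small enough, the perturbed curves $c_i^\varepsilon(\alpha)$ remain pairwise distinct and $C^0$-close to the $c_i^0(\alpha)$ on $Q\setminus\{\alpha_0\}$. Since $V$ is known, each $c_i^0$ is a known smooth function, and we can label the singular points of $(p_\alpha^\varepsilon)_*\nu$ lying near $c_0$ by continuity and identify precisely which branch is $c_0^\varepsilon(\alpha)$; the value at $\alpha_0$ is then filled in by continuity.

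Now choose $\mathcal U_0\ni x_0$ small enough to exclude $x_1,\dots,x_k$ and, by further shrinking $Q$ and $\varepsilon_0$, arrange that $V_\alpha^\varepsilon$ has a unique non-degenerate minimum in $\mathcal U_0$ at $x_0^\varepsilon(\alpha)$, with critical value the spectrally identified function $c_0^\varepsilon(\alpha)$. The non-degeneracy hypothesis (\ref{3.11}) on $W$ at $(x_0,\alpha_0)$ inherited from the companion Theorem 5.1 persists on a neighborhood and makes $W$ the generating function of a canonical transformation between neighborhoods of $(x_0,\xi_0)\in T^*X$ and $(\alpha_0,\eta_0)\in T^*\mathbb{R}^n$, with $\xi_0=-\partial_x W(x_0,\alpha_0)$ and $\eta_0=\partial_\alpha W(x_0,\alpha_0)$. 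The hypotheses of Theorem 3.1 now apply with $F=V+\varepsilon V_1$ on $\mathcal U_0$ and $G=c_0^\varepsilon$ on $Q$, so the inversion formula (\ref{GFW}) recovers $(V+\varepsilon V_1)|_{\mathcal U_0}$ from $c_0^\varepsilon|_Q$, and subtracting the known $V$ yields $V_1|_{\mathcal U_0}$. The main obstacle is the branch-identification step of the previous paragraph: one must guarantee that the $k+1$ perturbed curves $c_i^\varepsilon$ never collide or exchange labels on the relevant neighborhood, which forces $\varepsilon_0$ to be chosen small relative to the linear-order separation produced by the envelope theorem.
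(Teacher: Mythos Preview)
The paper omits the proof entirely, declaring it ``practically identical'' to that of Theorem~5.1; your proposal is considerably more explicit and follows the intended route---track the branch of critical values emanating from $x_0$ using the \emph{known} function $V$, then feed that branch into the generalized Legendre inversion of Theorems~3.1 and~3.6 exactly as in Theorem~5.1.

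There is, however, a genuine gap in your branch-separation step. You assert that the gradients $\partial_\alpha c_i^0(\alpha_0)=\partial_\alpha W(x_i,\alpha_0)$ are pairwise distinct ``since the $x_i$ are distinct.'' This amounts to \emph{global} injectivity of the map $x\mapsto\partial_\alpha W(x,\alpha_0)$ on the set $\{x_0,\dots,x_k\}$, whereas the only hypothesis inherited from Theorem~5.1 is the non-degeneracy of $\big[\partial^2 W/\partial x_i\,\partial\alpha_j\big]$ at the single point $(x_0,\alpha_0)$, which yields only local injectivity near $x_0$. The competing critical points $x_1,\dots,x_k$ may lie far from $x_0$, and nothing stated excludes $\partial_\alpha W(x_j,\alpha_0)=\partial_\alpha W(x_0,\alpha_0)$ for some $j\ne 0$; in that event the unperturbed curves $c_0^0$ and $c_j^0$ agree to first order at $\alpha_0$ and your labeling-by-continuity scheme need not succeed uniformly in $\varepsilon$. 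You also tacitly assume the other $x_i$ are non-degenerate so that the implicit function theorem produces smooth branches $x_i^\varepsilon(\alpha)$, which the theorem as stated does not guarantee.

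The paper's introduction makes clear that the intended standing hypothesis for \S5 is that $V_\alpha$ is Morse with \emph{distinct} critical values (for $\alpha$ in a neighborhood of $\alpha_0$); under that assumption the unperturbed branches are already separated on $Q\setminus\{\alpha_0\}$ without any appeal to the envelope theorem, and your argument goes through. You should make this assumption explicit rather than try to derive it from the local condition~(\ref{3.11}).
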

 The proof of this theorem is practically identical with the proof of theorem 5.1 and will be omitted 
 
 To apply these results to concrete examples we need to know that the condition (\ref{3.11}) is satisfied, and as we saw in section 4 this is not trivial to verify even in simple examples. We will show however that for an arbitrary toric variety this condition is satisfied if we make a fairly restrictive assumption about the location of the point, $x_0$. Recall that for a toric variety the manifold $X=M_0/G$ can be identified with the interior of the moment polytope $\mathcal P$, i.e. one has an identification $x \in X \mapsto s \in \mathcal P$. 
\begin{theorem}
The condition (\ref{3.11}) is satisfied at $(s_0, \alpha_0)$ if $s_0$ lies sufficiently close to a vertex of $\Delta$ and sufficiently far away from all the other vertices and if, in addition, 
\bel{5.1}
\langle n_i, \alpha_0\rangle \ne 0, i=1, \cdots, d,
\ee
where the $n_i$'s are the normal vectors to the facets of $\mathcal P$ meeting at this vertex. 
\end{theorem}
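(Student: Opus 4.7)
Let $v_0$ denote the vertex in question. By the lemma at the start of Section~4, the matrix in (\ref{3.11}) equals $L^{T}AL$, where $L=[l_i^j]_{d\times n}$ has as rows the vectors $\vec l_i$ defining the facets of $\mathcal{P}$ and $A=\mathrm{diag}(A_1,\ldots,A_d)$ with
\[
A_i= \frac{-2\,dl_i(\alpha_0)}{l_i(s_0)^{2}}.
\]
Relabel the facets so that $l_1,\ldots,l_n$ are the ones meeting at $v_0$: then $l_i(v_0)=0$ for $i\le n$ while $l_i(v_0)>0$ for $i>n$. The Delzant \emph{smoothness} hypothesis at $v_0$ says that the vectors $\vec l_1,\ldots,\vec l_n$ form a $\bbZ$-basis of $\bbZ^{n}$, so the $n\times n$ submatrix $L_0=[l_i^j]_{1\le i,j\le n}$ satisfies $\det L_0=\pm 1$ and is in particular invertible.

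Split $L^{T}AL=M_0+M_1$, where $M_0=L_0^{T}A^{(0)}L_0$ collects the contribution of the $n$ facets through $v_0$ and $M_1=L_1^{T}A^{(1)}L_1$ collects the remaining $d-n$ contributions. A direct computation gives
\[
\det M_0=(\det L_0)^{2}\prod_{i=1}^{n}\frac{-2\,dl_i(\alpha_0)}{l_i(s_0)^{2}}=\prod_{i=1}^{n}\frac{-2\,dl_i(\alpha_0)}{l_i(s_0)^{2}},
\]
which, by hypothesis (\ref{5.1}), is non-zero and blows up as $s_0\to v_0$. Because the \emph{simplicity} condition forces only these $n$ forms to vanish at $v_0$, any sufficiently small neighborhood of $v_0$ that stays away from the other vertices keeps $l_i(s_0)$ uniformly bounded below for $i>n$; hence $A^{(1)}$, and consequently $M_1$, remains uniformly bounded.

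Finally factor $\det(M_0+M_1)=\det(M_0)\cdot\det(I+M_0^{-1}M_1)$. Since $\|M_0^{-1}\|\le\|L_0^{-1}\|^{2}\,\|(A^{(0)})^{-1}\|=O\bigl(\max_i l_i(s_0)^{2}\bigr)\to 0$ while $M_1$ stays bounded, we have $\det(I+M_0^{-1}M_1)\to 1$, and therefore $\det(L^{T}AL)$ remains non-zero whenever $s_0$ is close enough to $v_0$ and bounded away from the other vertices. This establishes the non-degeneracy condition (\ref{3.11}) at $(s_0,\alpha_0)$. The argument is essentially a dominant-block-plus-perturbation computation; smoothness supplies the invertibility of the block $L_0$, simplicity isolates the singular block $A^{(0)}$ from the bounded remainder, and (\ref{5.1}) is exactly what is needed to guarantee that none of the dominant diagonal entries of $A^{(0)}$ accidentally vanishes. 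There is no real obstacle beyond this bookkeeping.
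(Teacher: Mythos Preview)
Your proof is correct and follows essentially the same dominant-block-plus-bounded-remainder argument as the paper. The paper's version simply invokes the Delzant smoothness condition to change coordinates so that the vertex is the origin and the adjacent facets are the coordinate hyperplanes $s_i=0$, making your $L_0$ the identity; the mixed Hessian then reads $-\delta_{ij}\,\alpha_i/s_i^{2}$ plus bounded terms, which is exactly your $M_0+M_1$ decomposition in this special frame, and the conclusion is stated without writing out the perturbation estimate explicitly.
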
 
\begin{proof}
We can without loss of generality assume that the vertex above is the origin and that the facets meeting at this vertex are the hyperplanes, $s_i=0$. Then by (\ref{Walphax}), 
\[
W(s, \alpha) = \frac 12 \sum \frac{\alpha_i^2}{s_i^2} + \cdots
\]
and 
\[
\frac{\partial W}{\partial \alpha_i \partial s_j} = -2 \delta_{ij}\frac{\alpha_i}{s_i^2} + \cdots,
\]
where the ``$\cdots$" are negligible small compared with the first term. 
\end{proof}
 
Thus since the vertices of $\mathcal P$ are the images in $M/G$ of the fixed points of $G$ one obtain from theorem 5.1 and 5.2 interesting local inverse spectral and spectral rigidity results for the Schr\"odinger operator (\ref{P}) on small neighborhood of these fixed points.

\section{Reduction in stages}

Let $X$ be a compact $n$-dimensional manifold and $M \to X$ a circle bundle over $X$. Then the space of functions on $M$ which transform under the action, $\tau: M \times S^1 \to M$, of $S^1$ on $M$ according to the law
\begin{equation}
\tau_\theta^* f = e^{i\theta}f
\end{equation}
can be viewed as sections of a line bundle, $\mathbb L \to X$, and the spaces of functions which transform according to the law 
\begin{equation}\label{Ntheta}
\tau_\theta^* f = e^{iN\theta}f
\end{equation}
as sections of its $N$th tensor power, $\mathbb L^N$. Moreover, if $M$ is equipped with an $S^1$-invariant Riemannian metric and $\Delta: C^\infty(M) \to C^\infty(M)$ is its associated Laplace operator, then $\Delta$ preserves the space of functions (\ref{Ntheta}) and hence induces on $\mathbb L^N$ a Laplace operator 
\begin{equation}\label{LaplaceN}
\Delta_N: C^\infty(\mathbb L^N) \to C^\infty(\mathbb L^N),
\end{equation}
and if one sets $\hbar = \frac 1N$ and let $\hbar$ tends to zero, then the self adjoint operator, $\hbar^2 \Delta_N$, and the one parameter group of unitary operators that it generates become the quantization of the Hamiltonian system on $T^*X$ obtained by symplectic reduction from geodesic flow on $T^*M$. In particular the generator of this system is the Hamiltonian 
\begin{equation}\label{Hxxi}
H(x,\xi)=\langle \xi, \xi \rangle_x^*+V(x),
\end{equation}
where $\langle \cdot, \cdot \rangle_m$ is the Riemannian inner product on $T_mM$, $\langle \cdot ,\cdot \rangle_x$, $x=\pi(m)$, the ``reduced" Riemannian inner product on $T_xX$ and $V(x) = \langle \frac{\partial}{\partial \theta}, \frac{\partial}{\partial \theta} \rangle_m$. Moreover, if $\mathcal U$ is an open subset of $X$ over which $M$ admits a trivialization
\begin{equation}
M|_{\mathcal U} = \mathcal U \times S^1,
\end{equation}
the operator $\hbar^2 \Delta_N$, $N=\frac 1\hbar$, becomes a semi-classical Schr\"odinger operator of the form (\ref{P}) with leading symbol (\ref{Hxxi}).

Suppose now that the manifold $M$ admits additional symmetries, i.e. an $n$-torus, $G$, of isometries. Then by ``reduction in stages" the spectral invariants of this Schr\"odinger operator that one gets by reduction with respect to semi-classical weights, $\frac{\alpha}{\hbar}$, of $G$ are also invariants of $\Delta_N$. In other words the inverse spectral techniques described in \S 1-\S 5 apply not just to semi-classical Schr\"odinger operators, but also, by reduction in stages, to Laplace operators on line bundles.  

An application of this observation is the following: Suppose one is given a Riemannian manifold, $X$, a circle bundle
\begin{equation}
\pi: M \to X
\end{equation}
and a connection on this bundle, i.e. an $S^1$ equivariant splitting
\begin{equation}\label{TMSplit}
TM=T_{vert}+\pi^*TX. 
\end{equation}
Then one gets a Riemannian metric on $M$ by requiring this splitting to be an orthogonal splitting and by requiring the metric on the second summand to be the pull back of the metric on $TX$. (Thus the only ambiguity in the definition of this metric is the inner product on the first factor, i.e. the function $V$ in the paragraph above.) Moreover, having equipped $M$ with such a metric one gets a Laplace operator on $M$ and, by restriction, a Laplace operator on each of the line bundles (\ref{LaplaceN}), and a natural question to ask ( a question that can, in particular, be answered using the techniques of this paper) is whether $V$ is a  spectral invariant of the operators (\ref{LaplaceN}). In other words, given a metric on $M$ that is compatible with the connection (\ref{TMSplit}), is it spectrally determined?

\end{document}